\theoremstyle{plain}
\newtheorem{thm}{Theorem}[section]
\newtheorem{lem}[thm]{Lemma}
\newtheorem{note}{Note}[section]
\theoremstyle{definition}
\newtheorem{defn}{Definition}[section]
\newtheorem{rem}{Remark}[section]
\begin{document}
\setcounter {page}{1}
\title{ Strong $I$ AND $I^*$-statistically pre-Cauchy double sequences in Probabilistic Metric Spaces}

\author[P. Malik, A. Ghosh AND M. Maity]{ Prasanta Malik*, Argha Ghosh* and Manojit Maity** \ }
\newcommand{\acr}{\newline\indent}
\maketitle
\address{{*\,} Department of Mathematics, The University of Burdwan, Golapbag, Burdwan-713104,
West Bengal, India.
                Email: pmjupm@yahoo.co.in., buagbu@yahoo.co.in\acr
           {**\,} Boral High School, Kolkata-700154, India. Email: mepsilon@gmail.com\\}

\maketitle
\begin{abstract}In this paper we consider the notion of strong $I$-statistically pre-Cauchy
double sequences in probabilistic metric spaces in line of Das et.
al. [6] and introduce the new concept of strong
$I^*$-statistically pre-Cauchy double sequences in real line as
well as in probabilistic metric spaces. We mainly study inter
relationship among strong $I$-statistical convergence, strong
$I$-statistical pre-Cauchy condition and strong $I^*$-statistical
pre-Cauchy condition for double sequences in probabilistic metric
spaces and examine some basic properties of these notions.
\end{abstract}
\author{}
\maketitle
\textbf{Key words and phrases} : Probabilistic metric space, strong $I$-statistical convergence, strong $I$-statistical pre-Cauchy condition, strong $I^*$-statistical pre-Cauchy condition. \\

\textbf {AMS subject classification (2010) :} 54E70, 40B05 \\

\textbf{
\begin{center}
1. Introduction
\end{center}
} The notion of probabilistic metric (PM) was introduced by K.
Menger [10] under the name of ``statistical metric spaces" by
considering the distance between two pints $x$ and $y$ as a
distribution function $F_{xy}$ instead of a non-negative real
number and the value of the function $F_{xy}$ at any $t> 0 $ i.e.
$F_{xy} (t) $ is interpreted as the probability that the distance
between the points $x$ and $y$ is $\leq t$. After Menger, the
theory of probabilistic metric was developed by Schwiezer and
Sklar ([16], [17], [18], [19]), Tardiff [23], Thorp [24] and many
others. A through development of probabilistic metric spaces ( can
be seen from the famous book Schwiezer and Sklar [20]. Many
different topologies and it is the main tool of our paper.

The idea of usual notion of convergence of real sequences was
extended to statistical convergence by Fast [8] and Schoenberg
[15] independently. For the last few years a lot of work has been
done on this convergence ( see [2], [8], [9], [14], [22] etc. ).
In [2] the notion of statistically pre-Cauchy sequences of real
numbers was introduced and it was shown that statistically
convergent sequences are always statistically pre-Cauchy and the
converse statement holds under certain conditions. The notion of
statistical convergence was further extended to $I$-convergence
[13] using the ideals of $\mathbb{N}$ and also to $I$-statistical
convergence in [14]. The notion of strong $I$-statistical
convergence for double sequences of real numbers had been
introduced by Belen et. al. in [1]. Recently in [6] Das and Savas
introduced the notion of $I$-statistically pre-Cauchy sequence of
real numbers as a generalization of $I$-statistical convergence.
They proved that every $I$-statistically convergent sequence is
$I$-statistically pre-Cauchy and the converse is true under
certain sufficient conditions.

Following the line of Das and Savas [6], in this paper we
introduce the notion of strong $I$-statistically pre-Cauchy double
sequences in probabilistic metric spaces. We also introduce the
new notion of strong $I^*$-statistically pre-Cauchy double
sequences in probabilistic metric spaces. We show that every
strong $I$-statistically convergent double sequence is strong
$I$-statistically pre-Cauchy and every strong $I^*$-statistically
pre-Cauchy double sequence is strong $I$-statistically pre-Cauchy
and the converse of each of the results holds under certain
conditions.

\section{\textbf{Preliminaries  }}
 We now recall some definition and notation
\begin{defn}
If $K$ is a subset of the set of positive integer $\mathbb N,$ then $K_n$ denotes the set $\left\{k\in K:k\leq n\right\} $and $\left|K_n\right|$ denotes the number of elements in $K_n$.
 The ``natural density"of $K$
[4] is given by $\delta(K)=\displaystyle{\lim_{ n \rightarrow \infty}}{ \left|K_n\right|/n}$.
\end{defn}
\begin{defn}$[25]$
 Let $X\neq\phi $. A class $ I \subseteq 2^X $ of subsets of X is said to be
an ideal in X provided that $I$ satisfies these conditions:
\\(i)$\phi \in I$
\\(ii)$ A,B \in I \Rightarrow A \cup B\in I,$
\\(iii)$ A \in I, B\subseteq A \Rightarrow B\in I$
\end{defn}
An ideal is called non-trivial if X $\notin I.$
\begin{defn}$[25]$
 Let $X\neq\phi  $;. A non-empty class $\mathbb F$$\subseteq 2^X $ of subsets of X is
said to be a filter in X provided that:
\\(i)$\phi\notin \mathbb F $
\\(ii) $A,B\in\mathbb F \Rightarrow A \cap B\in\mathbb F,$
\\(iii)$ A \in\mathbb F, B\supseteq A \Rightarrow B\in\mathbb F$.
\end{defn}
The following result expresses a relation between the notions of ideal and filter:
\\
\\ \textbf{Result 1}. Let $I$ be a non-trivial ideal in X, $X\neq\phi ,.$ Then the class $\mathbb F(I)$$ = \left\{M\subseteq X : \exists A \in I: M = X\setminus A\right\}$
is a filter on X (we will call $\mathbb F(I) $ the filter associated with $I$).
\\The proof of result 1 is easy and so it can be left.
\\ A non-trivial ideal $I$ in X is called admissible if $\left\{x\right\} \in I$ for each $x \in X$.
\begin{defn}$[5]$
 A non-trivial ideal $I$ of $\mathbb{N}\times\mathbb{N}$ is called strongly admissible if $\left\{i\right\}\times\mathbb N$ and $\mathbb{N}\times\left\{i\right\}$ belong to $I$ for each $i\in\mathbb N$.
\end{defn}
 Clearly every admissible ideal of $\mathbb{N}\times\mathbb{N}$ is strongly admissible.
\\
\\We can also use the concept of porosity of subsets of a metric space.
\\
\\The concept of statistical convergence and the study of similar types of convergence ([1], [2], [3], [4]) lead us to introduced the notion of $I$-convergence
of sequences. This notion gives a unifying look at many types of convergence related to statistical convergence.
\\
\begin{defn}$[25]$
 : Let $I$ be a non-trivial ideal of $\mathbb N$. A sequence $x = \left\{x_n\right\}_{n=1}^\infty$ of real numbers is said to be $I$-convergent to $\zeta\in R $ if for every $\epsilon > 0$ the set
$A(\epsilon) = \left\{n : \left|x_n -\zeta\right|\geq\epsilon\right\} \in I$.
\\
\\If $x = \left\{x_n\right\}_{n=1}^\infty $ is $I$-convergent to $\zeta$ we write $I-\displaystyle{\lim_{n \rightarrow\infty}}{ x_n }= \zeta (~or~ I-lim x = \zeta)$
and the number $\zeta$ is called the $I$-limit of $x$.
\end{defn}
We now recall some basic concepts related to statistical
convergence of double sequences (see $[7]$ for more details).
Let $K\subset\mathbb{N}\times\mathbb{N}$. Let $K(n,m)$ be the
number of $(j,k)\in K$ such that $j\leq n, k\leq m$. The number
$\overline{d}_{2}(K)=\underset{n\rightarrow\infty}{\underset{m\rightarrow\infty}{\limsup}}\frac{K(n,m)}{nm}$
is called the upper double natural density of K. If the sequence
$\{\frac{K(n,m)}{nm}\}_{n,m\in\mathbb{N}}$ has a limit in
Pringsheim's sense, then we say that $K$ has the double natural
density and it is denoted by
\begin{center}
$d_{2}(K)=\underset{n\rightarrow\infty}{\underset{m\rightarrow\infty}{\lim}}\frac{K(n,m)}{nm}$.
\end{center}
\begin{defn} $[7]$ A double sequence $x=\{x_{jk}\}_{j,k\in\mathbb{N}}$ of real numbers is said to be statistically convergent to
$\xi \in \mathbb{R}$ if for every $\epsilon> 0$, we have
$d_{2}(A(\epsilon))=0$ where $A(\epsilon)=\{(j,k)\in
\mathbb{N}\times\mathbb{N}; |x_{jk}-\xi|\geq\epsilon\}$. In
this case we write
$st-\underset{k\rightarrow\infty}{\underset{j\rightarrow\infty}{\lim}}x_{jk}=\xi$.
\end{defn}

A statistically convergent double sequence of elements in a metric space $(X,\rho)$ is defined
essentially in the same way using $\rho(x_{jk},\xi)\geq \epsilon$
instead of $|x_{jk}-\xi|\geq\epsilon$.
\begin{defn}$[7]$  Let $(X,\rho)$ be a metric space. A double sequence $x=\{x_{jk}\}_{j,k\in\mathbb{N}}$ in $X$ is said to be statistically
Cauchy if for every $\epsilon\geq 0$, there exist natural
numbers $N=N(\epsilon)$ and $M=M(\epsilon)$ such that for
all $j,p\geq N$ and $k,q\geq M$,
\begin{center}
$d_{2}(\{(j,k)\in\mathbb{N}\times\mathbb{N}: \rho(x_{jk},x_{pq})\geq\epsilon\})=0$
\end{center}
\end{defn}
\begin{thm}$[7]$
A double sequence $x=\{x_{jk}\}_{j,k\in\mathbb{N}}$ in a metric
space $(X,\rho)$ is statistically convergent to $\xi\in X $ if and
only if there exists a subset
$K=\{(j,k)\in\mathbb{N}\times\mathbb{N}\}$ of
$\mathbb{N}\times\mathbb{N}$ such that $d_{2}(K)=1$ and
$\underset{(j,k)\in
K}{\underset{k\rightarrow\infty}{\underset{j\rightarrow\infty}{\lim}}}x_{jk}=\xi$.
\end{thm}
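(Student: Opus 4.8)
The plan is to establish the two implications separately. Producing the density‑one set from statistical convergence requires an explicit construction and is the main work; the converse is routine.

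\medskip
\emph{Sufficiency.} Suppose there is a set $K\subseteq\mathbb{N}\times\mathbb{N}$ with $d_{2}(K)=1$ and $\lim_{j,k\to\infty,\,(j,k)\in K}x_{jk}=\xi$. Fix $\epsilon>0$. By the definition of the (Pringsheim) limit along $K$ there are $N,M\in\mathbb{N}$ with $\rho(x_{jk},\xi)<\epsilon$ for every $(j,k)\in K$ such that $j\ge N$ and $k\ge M$. Hence
\[
A(\epsilon):=\{(j,k):\rho(x_{jk},\xi)\ge\epsilon\}\subseteq\bigl((\mathbb{N}\times\mathbb{N})\setminus K\bigr)\cup\bigl(\{1,\dots,N-1\}\times\mathbb{N}\bigr)\cup\bigl(\mathbb{N}\times\{1,\dots,M-1\}\bigr).
\]
I would then observe that each of the three sets on the right has upper double density $0$ --- the first because $d_{2}(K)=1$, and the two coordinate strips because $(N-1)m/(nm)\to0$ and $(M-1)n/(nm)\to0$ as $n,m\to\infty$ in Pringsheim's sense. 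Since $\overline{d}_{2}$ is monotone and finitely subadditive, $d_{2}(A(\epsilon))=0$; as $\epsilon>0$ was arbitrary, $x$ is statistically convergent to $\xi$.

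\medskip
\emph{Necessity.} Assume $st$-$\lim x_{jk}=\xi$ and, for $i\in\mathbb{N}$, put $A_{i}=\{(j,k):\rho(x_{jk},\xi)\ge1/i\}$, so that $d_{2}(A_{i})=0$ for each $i$ and $A_{1}\subseteq A_{2}\subseteq\cdots$. Using $d_{2}(A_{i})=0$ I would choose a strictly increasing sequence $t_{1}<t_{2}<\cdots$ of positive integers with $t_{1}=1$ and $|A_{i}(n,m)|\le nm/i$ whenever $n\ge t_{i}$ and $m\ge t_{i}$. Partition $\mathbb{N}\times\mathbb{N}$ into the blocks $B_{pq}=[t_{p},t_{p+1})\times[t_{q},t_{q+1})$, $p,q\ge1$, and define
\[
K=(\mathbb{N}\times\mathbb{N})\setminus\bigcup_{p,q\ge1}\bigl(A_{\min\{p,q\}}\cap B_{pq}\bigr).
\]
Convergence of $x$ along $K$ is then immediate: given $\epsilon>0$ pick $i_{0}$ with $1/i_{0}<\epsilon$; any $(j,k)\in K$ with $j\ge t_{i_{0}}$ and $k\ge t_{i_{0}}$ lies in some $B_{pq}$ with $p,q\ge i_{0}$, so $\min\{p,q\}\ge i_{0}$, and since $(j,k)\notin A_{\min\{p,q\}}$ we get $\rho(x_{jk},\xi)<1/\min\{p,q\}\le1/i_{0}<\epsilon$.

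\medskip
The remaining --- and main --- step is to verify $d_{2}(K)=1$, equivalently $\overline{d}_{2}(E)=0$ for $E:=\bigcup_{p,q}(A_{\min\{p,q\}}\cap B_{pq})$. Here I would fix a large integer $r$, take any $(n,m)$ with $n\ge t_{r}$ and $m\ge t_{r}$ (say $n\le m$; the case $n>m$ is symmetric, using the largest index $Q$ with $t_{Q}\le m$ in place of $P$ below), and split the points of $E$ lying in $[1,n]\times[1,m]$ according to whether $\min\{p,q\}\le r-1$ or $\min\{p,q\}\ge r$. Because the $A_{i}$ increase with $i$, the first group of points all lie in $A_{r-1}$, so there are at most $nm/(r-1)$ of them; for the second group one has $p,q\ge r$, and monotonicity again forces these points into $A_{P}$, where $P$ is the largest index with $t_{P}\le n$, giving at most $nm/P\le nm/r$ of them. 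Thus $|E(n,m)|/(nm)\le2/(r-1)$ whenever $n,m\ge t_{r}$, and letting $n,m\to\infty$ and then $r\to\infty$ yields $\overline{d}_{2}(E)=0$. The delicate point is exactly this estimate: one must design the blocks so that, inside any rectangle, all ``low‑index'' removals collapse (via monotonicity of the $A_{i}$) into the single small‑density set $A_{r-1}$ and all ``high‑index'' removals into the single small‑density set $A_{P}$, and then carry out the Pringsheim double limit --- first in $(n,m)$, then in $r$ --- correctly, in particular handling the off‑diagonal case $n\ne m$, which is what dictates using $\min\{p,q\}$ (rather than $p$ or $q$ alone) in the definition of $K$.
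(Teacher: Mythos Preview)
The paper does not supply a proof of this theorem: it is quoted verbatim from reference~[7] in the preliminaries and left unproved, so there is no in-paper argument to compare your proposal against.

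That said, your argument is sound and is the standard one. The sufficiency direction is routine and correct. For the necessity, your block construction with the index $\min\{p,q\}$ is exactly what is needed to cope with Pringsheim (unrestricted) limits in two indices; the density estimate is the crux, and your two-part split (low indices collapsed into $A_{r-1}$ via monotonicity, high indices collapsed into $A_{P}$ with $P\ge r$) gives the bound $|E(n,m)|/(nm)\le 2/(r-1)$ for $n,m\ge t_{r}$, which suffices. One small point worth making explicit when you write it up: to apply the defining inequality $|A_{r-1}(n,m)|\le nm/(r-1)$ you need $n,m\ge t_{r-1}$, which follows from $n,m\ge t_{r}>t_{r-1}$; and in the high-index part you use that $(j,k)\in B_{pq}$ with $j\le n$ forces $t_{p}\le j\le n$, hence $p\le P$, so $\min\{p,q\}\le P$ and $A_{\min\{p,q\}}\subseteq A_{P}$. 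With those two lines spelled out, the proof is complete.
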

\begin{thm}$[7]$ For a sequence $x=\{x_{jk}\}_{j,k\in\mathbb{N}}$ in a metric space $(X,\rho)$, the following statements are equivalent:
\begin{enumerate}
\item $x$ is statistically convergent to  $l\in X$. \item $x$ is
statistically Cauchy. \item There exists a subset
$M\subset\mathbb{N}\times\mathbb{N}$ such that $d_{2}(M)=1$ and
$\{x_{jk}\}_{(j,k)\in M}$ converges to $l$.
\end{enumerate}
\end{thm}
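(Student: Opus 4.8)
The plan is to take the equivalence $(1)\Leftrightarrow(3)$ as already known, since it is exactly the content of the immediately preceding theorem, and to splice $(2)$ into the cycle by establishing $(1)\Rightarrow(2)$ and $(2)\Rightarrow(1)$; once these are in hand, $(2)\Leftrightarrow(3)$ follows at once.

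\textbf{$(1)\Rightarrow(2)$.} Suppose $x$ is statistically convergent to $l$ and fix $\epsilon>0$. The set $A=\{(j,k)\in\mathbb{N}\times\mathbb{N}:\rho(x_{jk},l)\geq\epsilon/2\}$ has $d_{2}(A)=0$, so $(\mathbb{N}\times\mathbb{N})\setminus A$ has double density $1$ and, in particular, is non-empty; choose $(N,M)$ in it, so that $\rho(x_{NM},l)<\epsilon/2$. By the triangle inequality every $(j,k)$ with $\rho(x_{jk},l)<\epsilon/2$ also has $\rho(x_{jk},x_{NM})<\epsilon$, whence $\{(j,k):\rho(x_{jk},x_{NM})\geq\epsilon\}\subseteq A$ and therefore has double density $0$. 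This is precisely the statistically Cauchy condition with pivot $x_{NM}$.

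\textbf{$(2)\Rightarrow(1)$.} For each $i\in\mathbb{N}$ apply the statistically Cauchy property with $\epsilon=2^{-i}$ to get a pivot $y_{i}=x_{N_{i}M_{i}}$ for which $C_{i}=\{(j,k):\rho(x_{jk},y_{i})<2^{-i}\}$ satisfies $d_{2}(C_{i})=1$. Since a finite intersection of sets of double density $1$ again has double density $1$, each $C_{i}\cap C_{i+1}$ is non-empty; picking a pair in it and applying the triangle inequality gives $\rho(y_{i},y_{i+1})<2^{-i}+2^{-i-1}$, and these bounds are summable, so $(y_{i})$ is Cauchy in $X$ and hence converges to some $l\in X$ (this is the sole point at which one uses that Cauchy sequences in $X$ have limits). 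Given $\epsilon>0$, pick $i$ with $2^{-i}<\epsilon/2$ and $\rho(y_{i},l)<\epsilon/2$; then for every $(j,k)\in C_{i}$ we have $\rho(x_{jk},l)\leq\rho(x_{jk},y_{i})+\rho(y_{i},l)<\epsilon$, so $\{(j,k):\rho(x_{jk},l)\geq\epsilon\}\subseteq(\mathbb{N}\times\mathbb{N})\setminus C_{i}$ has double density $0$. Since $\epsilon>0$ was arbitrary, $x$ is statistically convergent to $l$, which closes the cycle; combined with the preceding theorem it also yields $(2)\Leftrightarrow(3)$.

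\textbf{Where the work is.} None of the inequalities above is hard in isolation; the real content is extracting the limit $l$ from the pivots $y_{i}$, since the $C_{i}$ only pin $x_{jk}$ down to a shrinking radius on a density-$1$ set. One must first observe that the pivots themselves form a Cauchy sequence and then appeal to convergence of that sequence in $X$ before the ``uniformly close on a density-$1$ set'' step can be run. If instead one wants $(2)\Rightarrow(3)$ proved directly, the same $l$ works and one runs the density-$1$ ``staircase'' extraction of the preceding theorem on the nested sets $C_{1}\cap\cdots\cap C_{i}$ --- producing a single $M\subseteq\mathbb{N}\times\mathbb{N}$ with $d_{2}(M)=1$ along which both indices tend to infinity while $\rho(x_{jk},l)\to0$ is the only genuinely fiddly bookkeeping.
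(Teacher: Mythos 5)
The paper never proves this theorem; it is quoted from [7] as background, so there is no in-house argument to compare against, and your proof has to stand on its own. It does: it is the standard Fridy--M\'oricz argument adapted to double sequences. Taking $(1)\Leftrightarrow(3)$ from the immediately preceding theorem is legitimate (that theorem is verbatim this equivalence); $(1)\Rightarrow(2)$ by choosing a pivot outside the density-zero bad set is correct; and in $(2)\Rightarrow(1)$ the key auxiliary fact --- a finite intersection of sets of double density one has double density one, hence is non-empty --- holds because the complement is a finite union of density-zero sets, so the pivots $y_i$ do form a Cauchy sequence. The one substantive point is the one you flag yourself: extracting the limit $l$ of the pivots uses that Cauchy sequences in $X$ converge, so $(2)\Rightarrow(1)$ requires $X$ to be \emph{complete}. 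As stated for an arbitrary metric space the theorem is false (a rational Cauchy sequence with irrational limit, read as a double sequence in $\mathbb{Q}$, is statistically Cauchy but not statistically convergent), so the completeness hypothesis is missing from the statement rather than from your argument; your proof correctly isolates the single place it enters. One further reading issue worth recording: the paper's definition of statistically Cauchy says ``for all $j,p\geq N$ and $k,q\geq M$,'' which taken literally would make even $(1)\Rightarrow(2)$ fail (a single stray $x_{pq}$ far from $l$ with large indices would ruin it); your single-pivot reading, with $(p,q)=(N,M)$, is the intended and standard one, and is what makes the equivalence true.
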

\section{\textbf{Basic concepts of Probabilistic Metric Spaces.  }}
First we recall some basic concepts related to the probabilistic metric spaces (or PM spaces) which can be studied in details from
the fundamental book [21] by Schweizer and Sklar.
\begin{defn}
A non decreasing function $F:\mathbb{R}\rightarrow[0,1]$ defined
on $\mathbb{R}$ with $F(-\infty)=0$ and $ F(\infty)=1,$ where
$\mathbb{R}=[-\infty,\infty]$ , is called a distribution
function.

The set of all left continuous distribution function over
$(-\infty,\infty)$ is denoted by $\Delta$.

We consider the relation $\leq$ on $\Delta$ defined by $G \leq F
$ if and only if $G(x)\leq F(x)$ for all $x \in \mathbb{R}$. Clearly it
can be easily verified that the relation `$\leq$' is a partial
order on $\Delta$.
\end{defn}
\begin{defn}
For any $p\in [-\infty, \infty ]$ the unit step at $p$ is denoted
by $\epsilon_{p}$ and is defined to be a function in $\Delta$
given by
\begin{eqnarray*}
\epsilon_{p}(x)&=&0, ~~ -\infty \leq x \leq p\\
                  &=&1,~~  p < x\leq \infty.
\end{eqnarray*}
\end{defn}
\begin{defn}
A sequence $\{F_{n}\}_{n\in \mathbb{N}}$ of distribution functions
converges weakly to a distribution function $F$ and we write
$F_{n}\xrightarrow{w}F$ if and only if the sequence $\{F_{n}(x)\}_{n\in \mathbb{N}}$
converges to $F(x)$ at each continuity point $x$ of $F$.
\end{defn}
\begin{defn}
The distance between F and G in $\Delta$ is denoted by
$d_{L}(F,G)$ and is defined as the infimum of all numbers
$h\in(0,1]$ such that the inequalities
\begin{center}
$~~~~~~~F(x-h)-h\leq G(x)\leq F(x+h)+h$ \\

and $G(x-h)-h\leq F(x)\leq G(x+h)+h$
\end{center}
hold for every $x\in(-\frac{1}{h},\frac{1}{h})$.
\end{defn}
It is known that $d_{L}$ is a metric on $\Delta$ and for any
sequence $\{F_{n}\}_{n\in\mathbb{N}}$ in $\Delta$ and
$F\in\Delta$, we have
\begin{center}
$F_{n}\xrightarrow{w}F$  if and only if $d_{L}(F_{n},F)\rightarrow 0$.
\end{center}

Here we will be interested in the subset of $\Delta$ consisting
of those elements $G$ that satisfy $G(0)=0 $.
\begin{defn}
A  non-decreasing function $G$
defined on $\mathbb{R}^{+}=[0,\infty]$ that satisfies $G(0)=0$
and $ G(\infty)=1$ and is left continuous on $(0,\infty)$ is called a distance distribution function.
\end{defn}

The set of all distance distribution functions is denoted by
$\Delta^{+}$.

The function $d_{L}$ is clearly a metric on $\Delta^{+}$. The
metric space $(\Delta^{+},d_{L})$ is compact and hence complete.
\begin{thm}
Let $F\in\Delta^{+}$ be given . Then for any $t>0$, $ F(t)>1-t$ if
and only if $d_{L}(F,\epsilon_{0})<t$.
\end{thm}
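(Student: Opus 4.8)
The plan is to prove the two implications by first establishing an explicit formula for $d_{L}(F,\epsilon_{0})$. The key observation is that $F\in\Delta^{+}$ and $\epsilon_{0}$ both vanish on $(-\infty,0]$ and that $\epsilon_{0}$ has a single jump, from $0$ to $1$, at the origin. Fix a candidate level $h\in(0,1]$ and write out the four defining inequalities of $d_{L}$ with $G=\epsilon_{0}$, namely $F(x-h)-h\le\epsilon_{0}(x)\le F(x+h)+h$ and $\epsilon_{0}(x-h)-h\le F(x)\le\epsilon_{0}(x+h)+h$ for $x\in(-1/h,1/h)$, and examine each one according to the sign of the argument of $F$ involved. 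The inequalities $F(x-h)-h\le\epsilon_{0}(x)$ and $F(x)\le\epsilon_{0}(x+h)+h$ hold automatically, since in the only cases where they are not obvious the relevant argument of $F$ is negative and $F$ there equals $0$. The remaining two, $\epsilon_{0}(x)\le F(x+h)+h$ and $\epsilon_{0}(x-h)-h\le F(x)$, are nontrivial only for $x>0$, respectively $x>h$, and in both cases they say exactly that $F(y)\ge 1-h$ for all $y$ slightly larger than $h$; by monotonicity of $F$ this is equivalent to $F(h^{+})\ge 1-h$, where $F(h^{+})$ denotes the right-hand limit. Since $h=1$ always satisfies this ($F\ge 0$), the resulting set is nonempty and
\[
d_{L}(F,\epsilon_{0})=\inf\{\,h\in(0,1]:F(h^{+})\ge 1-h\,\}.
\]

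With this in hand the rest is short. First dispose of the range $t>1$: there $F(t)\ge 0>1-t$ and $d_{L}(F,\epsilon_{0})\le 1<t$, so both sides of the equivalence are true; hence one may assume $0<t\le 1$. If $d_{L}(F,\epsilon_{0})<t$, pick $h<t$ lying in the displayed set; then $F(t)\ge F(h^{+})\ge 1-h>1-t$ by monotonicity of $F$ (using $h<t$), which is $F(t)>1-t$. Conversely, suppose $F(t)>1-t$. Since $t\in(0,\infty)$, $F$ is left-continuous at $t$, and $h\mapsto 1-h$ is continuous, so $F(h)-(1-h)\to F(t)-(1-t)>0$ as $h\uparrow t$; thus there is $h\in(0,t)\subseteq(0,1]$ with $F(h)>1-h$, and then $F(h^{+})\ge F(h)>1-h$, so $h$ lies in the displayed set and $d_{L}(F,\epsilon_{0})\le h<t$.

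I expect the main obstacle to be the first step: the careful case analysis of the four L\'evy inequalities against the two-valued function $\epsilon_{0}$ over the shrinking interval $(-1/h,1/h)$, and in particular recognizing that the binding condition involves the right-hand limit $F(h^{+})$ rather than the value $F(h)$. That subtlety is exactly what makes the strict inequality $F(t)>1-t$ match the strict inequality $d_{L}(F,\epsilon_{0})<t$; once it is in place, the two implications require only monotonicity of $F$, one-sided continuity, and the trivial bound $d_{L}(F,\epsilon_{0})\le 1$.
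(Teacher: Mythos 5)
Your proof is correct. Note first that the paper itself gives no argument for this statement: it is quoted as a known preliminary fact from Schweizer and Sklar's book, so there is no in-paper proof to compare against. Your reduction of the four L\'evy inequalities is accurate under the paper's definition: with $F$ extended by $0$ on $(-\infty,0]$, the inequalities $F(x-h)-h\le\epsilon_{0}(x)$ and $F(x)\le\epsilon_{0}(x+h)+h$ are indeed automatic, while $\epsilon_{0}(x)\le F(x+h)+h$ forces $F(y)\ge 1-h$ for $y\in(h,h+1/h)$ and $\epsilon_{0}(x-h)-h\le F(x)$ forces the same for $y\in(h,1/h)$ (vacuously when $h=1$); by monotonicity both collapse to $F(h^{+})\ge 1-h$, giving $d_{L}(F,\epsilon_{0})=\inf\{h\in(0,1]:F(h^{+})\ge 1-h\}$. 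The two implications then go through exactly as you say: the forward one uses only $F(t)\ge F(h^{+})$ for $h<t$, and the converse correctly invokes the left-continuity of $F$ on $(0,\infty)$ (part of the definition of $\Delta^{+}$) to produce an admissible $h<t$. Your identification of $F(h^{+})$ rather than $F(h)$ as the binding quantity is the right subtlety to isolate; it is what makes the strict inequalities on the two sides match, and the trivial admissibility of $h=1$ correctly disposes of the range $t>1$.
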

\begin{defn}
A triangle function is a binary operation $\tau$ on $\Delta^{+}$,
$\tau:\Delta^{+}\times\Delta^{+}\rightarrow\Delta^{+}$ which is
commutative, nondecreasing, associative in each place, and
$\epsilon_{0}$ is the identity.
\end{defn}
\begin{defn} A probabilistic metric space, briefly PM space, is a triplet $(S,\mathfrak{F},\tau)$ where S is a nonempty set whose elements
are the points of the space; $\mathfrak{F}$ is a function from $S\times S$ into $\Delta^{+}$, $\tau$ is a triangle function, and the
following conditions are satisfied for all $x,y,z\in S$:
    \begin{enumerate}
    \item $\mathfrak{F}(x,x)=\epsilon_{0}$
    \item $\mathfrak{F}(x,y)\neq\epsilon_{0}$ if $x\neq y $
    \item $\mathfrak{F}(x,y)=\mathfrak{F}(y,x) $
    \item $ \mathfrak{F}(x,z)\geq \tau(\mathfrak{F}(x,y),\mathfrak{F}(y,z))$.
    \end{enumerate}

  From now on we will denote $\mathfrak{F}(x,y)$ by $F_{xy}$ and its value at $a$ by $F_{xy}(a)$.
\end{defn}

\begin{defn}Let $(S,\mathfrak{F},\tau)$ be a PM space. For $x\in S$ and $t>0$, the strong $t$-neighbourhood of $x$ is defined as the set
    \begin{center}
    $\mathcal{N}_{x}(t)=\{y\in S : F_{xy}(t)>1-t\} $.
    \end{center}

    The collection $\mathfrak{N}_{x}=\{\mathcal{N}_{x}(t):t>0 \}$ is called the strong neighbourhood system at $x$, and the union
     $\mathfrak{N}=\underset{x\in S}{\bigcup}\mathfrak{N}_{x}$ is called the strong neighbourhood system for $ S $.

    By Theorem 2.1, we can write

    \begin{center}
    $\mathcal{N}_{x}(t)=\{y\in S : d_{L}(F_{xy},\epsilon_{0})<t\} $.
    \end{center}

    If $\tau$ is continuous, then the strong neighbourhood system $\mathfrak{N}$ determines a Hausdorff topology for S.
    This topology is called the strong topology for S.
    \end{defn}
\begin{defn}
Let $(S,\mathfrak{F},\tau)$ be a PM space. Then for any $t>0$, the
subset $\mathfrak{U}(t)$ of $ S\times S$ given by
   \begin{center}
   $\mathfrak{U}(t)=\{ (x,y): F_{xy} (t)>1-t\} $
   \end{center}
   is called the strong $t$-vicinity.
   \end{defn}

\begin{thm}
 Let $(S,\mathfrak{F},\tau)$ be a PM space and $\tau $ be continuous. Then for any $t>0$, there is an $\eta>0$ such that
    \begin{center}
    $\mathfrak{U}(\eta)\circ\mathfrak{U}(\eta)\subset \mathfrak{U}(t) $
    \end{center}
    where $\mathfrak{U}(\eta)\circ\mathfrak{U}(\eta)=\{(x,z):$ \mbox{for some} y, $ (x,y)$ and $(y,z)\in \mathfrak{U}(t)\} $.
    \end{thm}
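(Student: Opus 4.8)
The plan is to deduce this from the continuity of $\tau$ at the point $(\epsilon_{0},\epsilon_{0})$, combined with triangle axiom (4) of a PM space and the characterization of $\mathfrak{U}(t)$ afforded by Theorem 2.1. First I would isolate two elementary facts. (a) Since $\epsilon_{0}$ is the identity for the triangle function, $\tau(\epsilon_{0},\epsilon_{0})=\epsilon_{0}$. (b) If $G,H\in\Delta^{+}$ with $G\leq H$, then $d_{L}(H,\epsilon_{0})\leq d_{L}(G,\epsilon_{0})$: indeed, whenever $d_{L}(G,\epsilon_{0})<s$, Theorem 2.1 gives $G(s)>1-s$, hence $H(s)\geq G(s)>1-s$, hence $d_{L}(H,\epsilon_{0})<s$ again by Theorem 2.1; letting $s\downarrow d_{L}(G,\epsilon_{0})$ yields the inequality. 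Both are routine, but (b) is the one I would write out with care, since the direction of the inequality is exactly what makes axiom (4) usable here.

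Next I would invoke continuity of $\tau:\Delta^{+}\times\Delta^{+}\to\Delta^{+}$ (with respect to $d_{L}$ in each variable jointly): by fact (a) and continuity at $(\epsilon_{0},\epsilon_{0})$, for the given $t>0$ there is an $\eta>0$ such that for all $F,G\in\Delta^{+}$,
\[
d_{L}(F,\epsilon_{0})<\eta \ \text{ and }\ d_{L}(G,\epsilon_{0})<\eta \ \Longrightarrow\ d_{L}(\tau(F,G),\epsilon_{0})<t .
\]
I would fix this $\eta$ (one may further require $\eta\leq t$, though it is not needed). Now take any $(x,z)$ in $\mathfrak{U}(\eta)\circ\mathfrak{U}(\eta)$, so there is $y\in S$ with $(x,y)\in\mathfrak{U}(\eta)$ and $(y,z)\in\mathfrak{U}(\eta)$, i.e. $F_{xy}(\eta)>1-\eta$ and $F_{yz}(\eta)>1-\eta$. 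By Theorem 2.1 this says $d_{L}(F_{xy},\epsilon_{0})<\eta$ and $d_{L}(F_{yz},\epsilon_{0})<\eta$, so the choice of $\eta$ gives $d_{L}(\tau(F_{xy},F_{yz}),\epsilon_{0})<t$. By axiom (4), $F_{xz}\geq\tau(F_{xy},F_{yz})$, so fact (b) gives $d_{L}(F_{xz},\epsilon_{0})\leq d_{L}(\tau(F_{xy},F_{yz}),\epsilon_{0})<t$, and Theorem 2.1 once more yields $F_{xz}(t)>1-t$, i.e. $(x,z)\in\mathfrak{U}(t)$. Hence $\mathfrak{U}(\eta)\circ\mathfrak{U}(\eta)\subset\mathfrak{U}(t)$.

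The argument is essentially a single chain of implications through Theorem 2.1, so there is no serious obstacle; the only point requiring attention is to apply continuity of $\tau$ at the correct base point $(\epsilon_{0},\epsilon_{0})$ and in the right form (uniform over a $d_{L}$-neighbourhood), and to have the monotonicity statement (b) in place so that passing from $\tau(F_{xy},F_{yz})$ up to $F_{xz}$ does not increase the distance to $\epsilon_{0}$.
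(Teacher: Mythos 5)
Your proof is correct. Note that the paper itself gives no proof of this theorem -- it is recalled from Schweizer and Sklar's book as background material (and its content is restated in the $d_{L}$-form in the Note immediately following it) -- so there is nothing to compare against; your argument is the standard one: continuity of $\tau$ at $(\epsilon_{0},\epsilon_{0})$ together with $\tau(\epsilon_{0},\epsilon_{0})=\epsilon_{0}$, the triangle axiom $F_{xz}\geq\tau(F_{xy},F_{yz})$, the antitonicity of $d_{L}(\cdot,\epsilon_{0})$ with respect to the pointwise order, and the translation between $F(t)>1-t$ and $d_{L}(F,\epsilon_{0})<t$ supplied by Theorem 2.1. The only convention worth flagging is that ``continuous'' for a triangle function must be read as joint continuity on $\Delta^{+}\times\Delta^{+}$ (the standard Schweizer--Sklar convention), which is exactly what your choice of $\eta$ requires.
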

\begin{note}
  From the hypothesis of Theorem 2.2 we can say that for any $t>0$, there is an $\eta >0 $ such that $F_{ab}(t)>1-t$
 whenever $F_{ac}(\eta)>1-\eta $ and $F_{cb}(\eta)>1-\eta$. Equivalently it can be written as: for any $t>0$, there is
 an $\eta>0$ such that $d_{L}(F_{ab},\epsilon_{0})<t$ whenever $d_{L}(F_{ac},\epsilon_{0})<\eta$ and $ d_{L}(F_{cb}, \epsilon_{0})<\eta$.
 \end{note}

    In a PM space $(S,\mathfrak{F},\tau)$, if $\tau$ is continuous then the strong neighbourhood system $\mathfrak{N}$
    determines a Kuratowski closure operation which is called the strong closure, and for any subset $A$ of $S$ the strong closure of $A$ is denoted by
    $\kappa(A)$  and for any nonempty subset $A$ of $S$
    \begin{center}
    $ \kappa(A)=\{ x\in S:$ for any $t>0$, there is a $y \in A$ such that $F_{xy}(t)>1-t \} $.
    \end{center}

\begin{rem} Throughout the rest of the article, in a PM space $(S,\mathfrak{F},\tau)$, we always assume that $\tau$ is
continuous and $S$ is endowed with the strong topology.
\end{rem}

\begin{defn}
Let $(S,\mathfrak{F},\tau)$ be a PM space. A sequence $\{
x_{n}\}_{n\in\mathbb{N}}$ in $S$ is said to be strong convergent
to a point $x \in S $ if for any $t>0$ , there exists a natural
number $N$ such that $x_{n}\in\mathcal{N}_{x}(t)$ where $n\geq
N$ and we write $x_{n}\rightarrow x$ or
$\underset{n\rightarrow\infty}{\lim}x_{n}=x$.

Similarly a sequence $\{ x_{n}\}_{n\in\mathbb{N}}$ in $S$ is called a strong Cauchy sequence if for any $t>0$,
there exists a natural number $N $ such that $(x_{m},x_{n})\in\mathfrak{U}(t)$ whenever $m,n \geq N$.

By the convergence of a double sequence we  mean convergence in
Pringsheim's sense $[16]$. A real double sequence
$x=\{x_{jk}\}_{j,k\in \mathbb{N}}$ is said to converge to a real
number $a$ if for every $\epsilon>0$, there exists a $N\in
\mathbb{N}$ such that $|x_{jk}-a|< \epsilon$ whenever
$j,k\geq N$.

A real double sequence $x=\{x_{jk}\}_{j,k\in\mathbb{N}}$ is said to be a Cauchy Sequence if for every
$\epsilon>0$, there exists $N,M\in\mathbb{N}$ such that for all $j,p\geq N$ ;  $k,q\geq M$, $|x_{jk}-x_{pq}|<\epsilon$.
\end{defn}
\begin{defn}
Let $(S,\mathfrak{F},\tau)$ be a PM space. A double sequence $x=\{x_{jk}\}_{j,k\in\mathbb{N}}$ in $S$ is said to be strong
convergent to a point $\xi\in S$ if for any $t>0$, there exists a natural number $K$ such that $x_{jk}\in \mathcal{N}_{\xi}(t)$ whenever $j,k\geq K $.

In this case we write $x_{jk}\rightarrow \xi$ or
$\underset{k\rightarrow\infty}{\underset{j\rightarrow\infty}{\lim}}x_{jk}=\xi$.

Similarly a double sequence $x=\{ x_{jk}\}_{j,k \in \mathbb{N}}$
in S is called a strong Cauchy double sequence if for any $t>0$,
there exist natural numbers $N,M$ such that for all $j,p\geq N$;
$k,q\geq M $, $(x_{jk}, x_{pq})\in \mathfrak{U}(t)$.
\end{defn}



\section{\textbf{Main Results  }}
In this section, we are concerned with ideal statistical pre-Cauchy and ideal statistical convergence for double sequences in a PM space.
\begin{defn}$[11]$
 A double sequence $\left(x_{jk}\right)_{j,k\in \mathbb{N}}$ of real numbers is $I$-statistically convergent to a real numbers $L$, and we write $x_{jk}\stackrel{I^s}{\rightarrow}L$, provided that for $\epsilon> 0$ and $\delta> 0$
\begin{center}
$\left\{\left(m,n\right)\in\mathbb{N}\times\mathbb{N} :\frac{1}{mn}\left|\left\{\left(j,k\right):\left|x_{jk}-L\right|\geq\epsilon,j\leq m,k\leq n\right\}\right|\geq\delta\right\}\in I$
\end{center}
\end{defn}
\begin{defn}$[11]$
  A double sequence $\left(x_{jk}\right)_{j,k\in \mathbb{N}}$ of real numbers is said to be $I$-statistically pre-Cauchy if for any $\epsilon> 0 $ and $\delta> 0$
\begin{center}
$\left\{(m,n)\in\mathbb{N}\times\mathbb{N}:\frac{1}{m^2n^2}\left|\left\{(j,k):\left|x_{jk}-x_{pq}\right|\geq\epsilon;j,p\leq m;k,q\leq n\right\}\right|\geq\delta\right\}\in I$.
\end{center}
\end{defn}
 Now we introduce some definition in a Probabilistic Metric space.
\begin{defn}
Let $\left(S,\mathfrak{ F},\tau\right)$ be a PM space. A double sequence $\left(x_{jk}\right)_{j,k\in \mathbb{N}}$ in S is strong $I$-statistically convergent to p in S, and we write $x_{jk}\stackrel{str-I^s}{\rightarrow}p$, provided that for $t> 0$ and $\delta> 0$
\begin{center}
$\left\{\left(m,n\right)\in\mathbb{N}\times\mathbb{N} :\frac{1}{mn}\left|\left\{\left(j,k\right):x_{jk}\notin\mathcal{N}_p(t),j\leq m,k\leq n\right\}\right|\geq\delta\right\}\in I$
\end{center}
\end{defn}
\begin{defn}
 Let $\left(S,\mathfrak{ F},\tau\right)$ be a PM space. A double sequence $\left(x_{jk}\right)_{j,k\in \mathbb{N}}$ in S is said to be strong $I$-statistically pre-Cauchy if for any $t> 0 $ and $\delta> 0$
\begin{center}
$\left\{(m,n)\in\mathbb{N}\times\mathbb{N}:\frac{1}{m^2n^2}\left|\left\{(j,k):(x_{jk},x_{pq})\notin \mathfrak{U}(t);j,p\leq m;k,q\leq n\right\}\right|\geq\delta\right\}\in I$.
\end{center}
\end{defn}
\begin{note}
In a PM space a double sequence which is strong statistically
convergent is clearly strong $I$-statistically convergent but
converse is not true for this we consider the following example.
Consider the equilateral PM space $(S,\mathfrak{F},\tau)$ where
$\mathfrak{F}$ is defined by \[ \mathfrak{F}_{pq} = \left\{
  \begin{array}{l l}
    F & ;\quad \text{if $ p \neq q $ }\\
    \epsilon_0 &; \quad \text{if $p = q$ }
  \end{array} \right.\] and M is the maximal triangular
function. Here $F\in\Delta^+$ is fixed and distinct from
$\epsilon_0$ and $\epsilon_\infty$. Let $I$ is an ideal of
$\mathbb{N}\times\mathbb{N}$. We let $A\in I$ such that
$A=\left\{(t_m,t_n):(m,n)\in\mathbb{N}\times\mathbb{N}\right\}$.
Let B be an subset of $\mathbb{N}\times\mathbb{N}$ such that
$d_2(B)=0$.Now fora fixed $p,q\in S$ we define
\[ x_{t_mt_n} = \left\{
  \begin{array}{l l}
    p & ;\quad \text{if $ (m,n) \in B $ }\\
    q &; \quad \text{if $ (m,n)\notin B $ }
  \end{array} \right.\]
 and $ x_{mn}= p ~ if ~(m,n)\notin A $.

Then $\left\{x_{mn}\right\}_{m,n\in\mathbb{N}}$ is $I$-statistically convergent to p but is not statistically convergent.
\end{note}
\begin{thm}
 An strong $I$-statistically convergent double sequence is strong $I$-statistically pre-Cauchy double sequence in a PM space.
\end{thm}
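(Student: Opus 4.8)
The plan is to reduce the pre-Cauchy estimate to the convergence estimate: I will bound the doubly averaged count of ``bad'' index quadruples $(j,k,p,q)$ by a constant multiple of the singly averaged count of indices $(j,k)$ for which $x_{jk}$ lies outside a suitable strong neighbourhood of the limit, and then invoke the ideal property. The only place where the structure of the PM space genuinely enters is the continuity of the triangle function $\tau$, which lets me replace the condition $(x_{jk},x_{pq})\in\mathfrak{U}(t)$ by simultaneous membership of $x_{jk}$ and $x_{pq}$ in a single strong $\eta$-neighbourhood of the limit point.

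First I would fix the limit, say $x_{jk}\stackrel{str-I^s}{\rightarrow}\xi$, and fix arbitrary $t>0$ and $\delta>0$. By Note 2.1 (this is exactly where continuity of $\tau$ is used) there is an $\eta>0$ such that $F_{ab}(t)>1-t$ whenever $F_{a\xi}(\eta)>1-\eta$ and $F_{\xi b}(\eta)>1-\eta$; equivalently, if $x_{jk}\in\mathcal{N}_\xi(\eta)$ and $x_{pq}\in\mathcal{N}_\xi(\eta)$, then $(x_{jk},x_{pq})\in\mathfrak{U}(t)$. Taking the contrapositive, $(x_{jk},x_{pq})\notin\mathfrak{U}(t)$ implies $x_{jk}\notin\mathcal{N}_\xi(\eta)$ or $x_{pq}\notin\mathcal{N}_\xi(\eta)$. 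Next, for each $(m,n)$ I would set $A_{mn}=\{(j,k):j\leq m,\ k\leq n,\ x_{jk}\notin\mathcal{N}_\xi(\eta)\}$ and note that every quadruple $(j,k,p,q)$ with $j,p\leq m$ and $k,q\leq n$ contributing to the pre-Cauchy count has $(j,k)\in A_{mn}$ or $(p,q)\in A_{mn}$; hence there are at most $2\,|A_{mn}|\,mn$ of them, which yields
\[
\frac{1}{m^2n^2}\bigl|\{(j,k,p,q):(x_{jk},x_{pq})\notin\mathfrak{U}(t);\ j,p\leq m;\ k,q\leq n\}\bigr|\leq\frac{2\,|A_{mn}|}{mn}.
\]

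From this inequality it follows that the set of pairs $(m,n)$ for which the left-hand side is $\geq\delta$ is contained in the set of pairs for which $|A_{mn}|/(mn)\geq\delta/2$. The latter set belongs to $I$, because $\{x_{jk}\}$ is strong $I$-statistically convergent to $\xi$ (apply the definition of strong $I$-statistical convergence with neighbourhood parameter $\eta$ and density threshold $\delta/2$), and since $I$ is an ideal, hence closed under subsets, the former set belongs to $I$ as well. As $t>0$ and $\delta>0$ were arbitrary, this shows that $\{x_{jk}\}$ is strong $I$-statistically pre-Cauchy. I do not expect a genuine obstacle here: the argument is essentially the combinatorial inequality displayed above together with the ideal axioms. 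If anything is the crux it is the use of Note 2.1 (the continuity of $\tau$) to pass from pairwise $\mathfrak{U}(t)$-membership to membership in a single strong $\eta$-neighbourhood of the limit; the counting of the quadruples also needs to be done carefully.
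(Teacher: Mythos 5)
Your proof is correct and follows essentially the same route as the paper: both arguments use Note 2.1 (continuity of $\tau$) to pass from $(x_{jk},x_{pq})\notin\mathfrak{U}(t)$ to membership of $x_{jk}$ or $x_{pq}$ outside a strong $\eta$-neighbourhood of the limit, and then conclude by subset containment and the heredity of the ideal. The only difference is cosmetic: you bound the bad quadruples directly by a union bound, $2|A_{mn}|/(mn)$, with threshold $\delta/2$, whereas the paper bounds the good quadruples from below by $(1-\delta_1)^2$ with $1-(1-\delta_1)^2<\delta$; your counting is, if anything, slightly cleaner.
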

 \begin{proof}
Let $ \left\{x_{jk}\right\}_{j,k\in \mathbb{N}}$ be strong $I$-statistically convergent to a in $S$. Let $t>0$ and $\delta>0$. Choose $\delta_1>0$ such that $1-(1-\delta_1)^2<\delta$.
\begin{center}
 Now for that $t> 0$, there exists an $\eta>0$ such that for all $a,b,c \in S$ we have $d_L(F_{ac},\epsilon_0)<t$ whenever $d_L(F_{ab},\epsilon_0)<\eta$ and $d_L(F_{bc},\epsilon_0)<\eta$..................(1).
\end{center}
Now for $\delta_1 > 0$ and $\eta> 0$ we have
\begin{center}
 Let $C=\left\{\left(m,n\right)\in \mathbb{N}\times\mathbb{N} :\frac{1}{mn} \left|\left\{\left(j,k\right):x_{jk}\notin\mathcal{ N}_a(\eta),j\leq m,k\leq n\right\}\right|\geq\delta_1\right\}\in I.$

\end{center}
 Let $(m,n)\in C^c$ then
\begin{center}
$\frac{1}{mn} \left|\left\{\left(j,k\right):x_{jk}\notin\mathcal{N}_a(\eta),j\leq m,k\leq n\right\}\right|<\delta_1$
\end{center}

\begin{center}
$\Rightarrow \frac{1}{mn}\left|\left\{\left(j,k\right):x_{jk}\in \mathcal{N}_a(\eta),j\leq m,k\leq n\right\}\right|>{1-\delta_1}.$

\end{center}
Let $B_{mn}=\left\{\left(j,k\right):x_{jk}\in \mathcal{N}_a(\eta),j\leq m,k\leq n\right\}$.
\\ Then for $(j,k), (p,q)\in B_{mn}$,
 $d_L(F_{x_{jk}a},\epsilon_0)<\eta$ and $d_L(F_{x_{pq}a},\epsilon_0)<\eta$ which implies $d_L(F_{x_{jk}x_{pq}},\epsilon_0)<t$ from (1).
\\ This implies $[\left|B_{mn}\right|^2/m^2n^2]\leq \frac{1}{m^2n^2}\left|\left\{(j,k):(x_{jk},x_{pq})\in\mathfrak{ U}(t);j,p\leq m;k,q\leq n\right\}\right|  $. Thus for all $(m,n)\in C^c$ we have
\begin{center}
$(1-\delta_1)^2<[\left|B_{mn}\right|^2/m^2n^2]\leq \frac{1}{m^2n^2}\left|\left\{(j,k):(x_{jk},x_{pq})\in\mathfrak{U}(t);j,p\leq m;k,q\leq n\right\}\right|  $

\end{center}
$\Rightarrow \frac{1}{m^2n^2}\left|\left\{(j,k):(x_{jk},x_{pq})\notin \mathfrak{U}(t);j,p\leq m;k,q\leq n\right\}\right|\leq 1-(1-\delta_1)^2<\delta  $. We see that for all $(m,n)\in C^c$, $ \frac{1}{m^2n^2}\left|\left\{(j,k):(x_{jk},x_{pq})\notin\mathfrak{U}(t);j,p\leq m;k,q\leq n\right\}\right|<\delta  $ and so
\begin{center}
$\left\{(m,n)\in\mathbb{N}\times\mathbb{N} : \frac{1}{m^2n^2}\left|\left\{(j,k):(x_{jk},x_{pq})\notin\mathfrak{U}(t);j,p\leq m;k,q\leq n\right\}\right|\geq\delta\right\}\subset C $.
\end{center}
 Since $C\in I$,
\begin{center}
$\left\{(m,n)\in\mathbb{N}\times\mathbb{N} : \frac{1}{m^2n^2}\left|\left\{(j,k):(x_{jk},x_{pq})\notin\mathfrak{U}(t);j,p\leq m;k,q\leq n\right\}\right|\geq\delta\right\}\in I.$
\end{center}
Hence $x$ is strong $I$-statistically pre-Cauchy.
\end{proof}
The next result gives a necessary and sufficient condition for a double sequence to be $I$-statistically pre-Cauchy.
\\
\begin{thm}
 Let $x = \left\{x_{jk}\right\}$ be double sequence in a PM space $(S,\mathfrak{ F},\tau)$. A double sequence $x = \left\{x_{jk}\right\} $ is  strong $ I$-statistically pre-
Cauchy if and only if $I-\lim\limits_{\stackrel{\stackrel{m\rightarrow\infty}{n\rightarrow\infty}}~} $$\frac{1}{m^2n^2} \sum\limits_{\stackrel{j,p\leq m}~}\sum\limits_{\stackrel{k,q\leq n}~}d_L(F_{{x_{jk}}{x_{pq}}},\epsilon_0)=0$
\end{thm}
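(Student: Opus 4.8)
The plan is to route everything through Theorem 2.1, which identifies membership in the strong $t$-vicinity with smallness of $d_L$: for any $x,y\in S$ and $t>0$ one has $(x,y)\in\mathfrak{U}(t)$ if and only if $F_{xy}(t)>1-t$ if and only if $d_L(F_{xy},\epsilon_0)<t$, and hence $(x,y)\notin\mathfrak{U}(t)$ exactly when $d_L(F_{xy},\epsilon_0)\ge t$. Write $a_{mn}=\frac{1}{m^2n^2}\sum_{j,p\le m}\sum_{k,q\le n}d_L(F_{x_{jk}x_{pq}},\epsilon_0)$, so that $0\le a_{mn}\le1$ because $d_L$ takes values in $[0,1]$, and for $t>0$ put $E_{mn}(t)=\{(j,k,p,q):j,p\le m,\ k,q\le n,\ d_L(F_{x_{jk}x_{pq}},\epsilon_0)\ge t\}$; by the remark above this is precisely the collection of index quadruples with $(x_{jk},x_{pq})\notin\mathfrak{U}(t)$ that appears in the definition of the strong $I$-statistical pre-Cauchy condition. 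The entire argument then rests on splitting the defining sum of $a_{mn}$ according to whether the summand is $<t$ or $\ge t$, which yields the two-sided estimate
\[
t\cdot\frac{|E_{mn}(t)|}{m^2n^2}\ \le\ a_{mn}\ \le\ t+\frac{|E_{mn}(t)|}{m^2n^2},
\]
valid for every $t>0$ and all $m,n$: in the upper bound the at most $m^2n^2$ quadruples with $d_L<t$ contribute less than $t$ on average and each of the remaining $|E_{mn}(t)|$ quadruples contributes at most $1$; in the lower bound one simply discards the terms with $d_L<t$ and bounds the rest below by $t$.

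For the forward implication I would assume $x$ is strong $I$-statistically pre-Cauchy and fix $\epsilon>0$. Taking $t=\epsilon/2$ and applying the pre-Cauchy hypothesis with this $t$ and with $\delta=\epsilon/2$ shows that $B=\{(m,n):|E_{mn}(t)|/m^2n^2\ge\epsilon/2\}\in I$. For $(m,n)\notin B$ the upper half of the displayed estimate gives $a_{mn}<\epsilon/2+\epsilon/2=\epsilon$, so $\{(m,n):a_{mn}\ge\epsilon\}\subseteq B\in I$, and hereditariness of $I$ puts this set in $I$. Since $\epsilon>0$ was arbitrary, $I\text{-}\lim_{m,n}a_{mn}=0$.

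For the converse I would assume $I\text{-}\lim_{m,n}a_{mn}=0$ and fix $t>0$ and $\delta>0$. The lower half of the displayed estimate shows that $|E_{mn}(t)|/m^2n^2\ge\delta$ forces $a_{mn}\ge t\delta$, hence $\{(m,n):|E_{mn}(t)|/m^2n^2\ge\delta\}\subseteq\{(m,n):a_{mn}\ge t\delta\}$; since $t\delta>0$, the hypothesis puts the right-hand set in $I$, so the left-hand set lies in $I$ as well, which is exactly the assertion that $x$ is strong $I$-statistically pre-Cauchy.

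I do not expect a genuine obstacle here. The only points that need a little care are invoking Theorem 2.1 cleanly to replace the condition ``$(x_{jk},x_{pq})\notin\mathfrak{U}(t)$'' by ``$d_L(F_{x_{jk}x_{pq}},\epsilon_0)\ge t$'', recording the uniform bound $d_L\le1$ so that the good quadruples contribute at most $t$ to $a_{mn}$ while each bad one contributes at most $1$, and noting that choosing $t$ first in the forward direction is legitimate because the pre-Cauchy condition is assumed to hold for \emph{every} $t>0$. Everything else is the standard averaging comparison, and the use of the ideal properties is confined to hereditariness (subsets of members of $I$ lie in $I$).
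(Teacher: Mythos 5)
Your proposal is correct and follows essentially the same route as the paper: the direction from $I\text{-}\lim a_{mn}=0$ to the pre-Cauchy condition uses the Markov/Chebyshev lower bound $t\,|E_{mn}(t)|/m^2n^2\le a_{mn}$, and the reverse direction splits the sum at level $t$ and uses $d_L\le 1$ to get $a_{mn}\le t+|E_{mn}(t)|/m^2n^2$, exactly as in the paper. Your explicit appeal to Theorem 2.1 to pass between $(x_{jk},x_{pq})\notin\mathfrak{U}(t)$ and $d_L(F_{x_{jk}x_{pq}},\epsilon_0)\ge t$ is a point the paper glosses over, but it is not a difference in method.
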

\begin{proof}
 First we assume that $I-\lim\limits_{\stackrel{\stackrel{m\rightarrow\infty}{n\rightarrow\infty}}~} $$\frac{1}{m^2n^2} \sum\limits_{\stackrel{j,p\leq m}~}\sum\limits_{\stackrel{k,q\leq n}~}d_L(F_{{x_{jk}}{x_{pq}}},\epsilon_0)=0$. Note that for $t>0$ and $(m,n)\in \mathbb{N}\times\mathbb{N}$ we have $\frac{1}{m^2n^2} \sum\limits_{\stackrel{j,p\leq m}~}\sum\limits_{\stackrel{k,q\leq n}~}d_L(F_{{x_{jk}}{x_{pq}}},\epsilon_0) \geq t(\frac{1}{m^2n^2}\left|\left\{(j,k):d_L(F_{{x_{jk}}{x_{pq}}},\epsilon_0)\geq t;j,p\leq m;k,q\leq n\right\}\right|)$.
\\ Therefore for any $\delta>0$,\\ $A=\left\{(m,n)\in \mathbb{N}\times\mathbb{N}:\frac{1}{m^2n^2}\left|\left\{(j,k):d_L(F_{{x_{jk}}{x_{pq}}},\epsilon_0)\geq t;j,p\leq m;k,q\leq n\right\}\right|\geq\delta\right\}$\\$\subset \left\{(m,n)\in \mathbb{N}\times\mathbb{N} :\frac{1}{m^2n^2} \sum\limits_{\stackrel{j,p\leq m}~}\sum\limits_{\stackrel{k,q\leq n}~}d_L(F_{{x_{jk}}{x_{pq}}},\epsilon_0)\geq\delta t\right\}$.\\ Since$I-\lim\limits_{\stackrel{\stackrel{m\rightarrow\infty}{n\rightarrow\infty}}~} $$\frac{1}{m^2n^2} \sum\limits_{\stackrel{j,p\leq m}~}\sum\limits_{\stackrel{k,q\leq n}~}d_L(F_{{x_{jk}}{x_{pq}}},\epsilon_0)=0$ thus right hand side belongs to $I$ which implies that
\begin{center}
$\left\{(m,n)\in\mathbb{N}\times\mathbb{N}:\frac{1}{m^2n^2}\left|\left\{(j,k):d_L(F_{{x_{jk}}{x_{pq}}},\epsilon_0)\geq t;j,p\leq m;k,q\leq n\right\}\right|\geq\delta\right\}\in I.$
\end{center}
 This shows that $x$ is strong $I$-statistically pre-Cauchy.
\\
\\   Conversely assume that $x$ is strong $I$- statistically pre-Cauchy double sequence in a PM space and  let $\delta>0$ has been given. Choose $t>0$ and $\delta_1>0$ such that $\frac{t}{2}+\delta_1<\delta$. Then for each $(m,n)\in \mathbb{N}\times\mathbb{N}$,
\\ $\frac{1}{m^2n^2} \sum\limits_{\stackrel{j,p\leq m}~}\sum\limits_{\stackrel{k,q\leq n}~}d_L(F_{{x_{jk}}{x_{pq}}},\epsilon_0)$
\\= $\frac{1}{m^2n^2} \sum\limits_{\stackrel{\stackrel{d_L(F_{{x_{jk}}{x_{pq}}},\epsilon_0)<\frac{t}{2}}{j,p\leq m,k,q\leq n}}~}d_L(F_{{x_{jk}}{x_{pq}}},\epsilon_0)$+
 $\frac{1}{m^2n^2} \sum\limits_{\stackrel{\stackrel{d_L(F_{{x_{jk}}{x_{pq}}},\epsilon_0)\geq\frac{t}{2}}{j,p\leq m,k,q\leq n}}~}d_L(F_{{x_{jk}}{x_{pq}}},\epsilon_0)$
$\leq \frac{t}{2}+\left\{\frac{1}{m^2n^2}\left|\left\{(j,k):d_L(F_{{x_{jk}}{x_{pq}}},\epsilon_0)\geq\frac{t}{2};j,p\leq m;k,q\leq n\right\}\right|\right\}$. [ Since $d_L(F_{x_{jk}x_{pq}},\epsilon_0)\leq 1$].
\\ Now since $x$ is strong $I$-statistically pre-Cauchy for that $\delta_1>0$
\begin{center}
$A= \left\{(m,n)\in\mathbb{N}\times\mathbb{N}:\frac{1}{m^2n^2}\left|\left\{(j,k):d_L(F_{{x_{jk}}{x_{pq}}},\epsilon_0)\geq \frac {t}{2};j,p\leq m;k,q\leq n\right\}\right|\geq\delta_1\right\}\in I$
\end{center}
Then for $(m,n)\in A^c$,  $\frac{1}{m^2n^2}\left|\left\{(j,k):d_L(F_{{x_{jk}}{x_{pq}}},\epsilon_0)\geq t;j,p\leq m;k,q\leq n\right\}\right|<\delta_1$. So $\frac{1}{m^2n^2} \sum\limits_{\stackrel{j,p\leq m}~}\sum\limits_{\stackrel{k,q\leq n}~}d_L(F_{{x_{jk}}{x_{pq}}},\epsilon_0)\leq\frac{t}{2}+\delta_1<\delta$. We see that for all $(m,n)\in A^c$ we have $\frac{1}{m^2n^2} \sum\limits_{\stackrel{j,p\leq m}~}\sum\limits_{\stackrel{k,q\leq n}~}d_L(F_{{x_{jk}}{x_{pq}}},\epsilon_0)<\delta$. That is
\begin{center}
$\left\{(m,n)\in \mathbb{N}\times\mathbb{N}:\frac{1}{m^2n^2} \sum\limits_{\stackrel{j,p\leq m}~}\sum\limits_{\stackrel{k,q\leq n}~}d_L(F_{{x_{jk}}{x_{pq}}},\epsilon_0)\geq\delta_1\right\}\subset A$.
\end{center}
  Since $A\in I$ so
    \begin{center}
    $\left\{(m,n)\in\mathbb{N}\times\mathbb{N}:\frac{1}{m^2n^2} \sum\limits_{\stackrel{j,p\leq m}~}\sum\limits_{\stackrel{k,q\leq n}~}d_L(F_{{x_{jk}}{x_{pq}}},\epsilon_0)\geq\delta_1\right\}\in I$.
    \end{center}
     Therefore $I-\lim\limits_{\stackrel{\stackrel{m\rightarrow\infty}{n\rightarrow\infty}}~} $$\frac{1}{m^2n^2} \sum\limits_{\stackrel{j,p\leq m}~}\sum\limits_{\stackrel{k,q\leq n}~}d_L(F_{{x_{jk}}{x_{pq}}},\epsilon_0)=0$. This proves the necessity of the theorem.
\end{proof}
 Now we give a sufficient condition under which a strong $I$-statistically pre-Cauchy double sequence can be a strong $I$-statistically convergent.
\begin{defn}$[11]$
Let $I$ be an admissible ideal of $\mathbb{N}\times\mathbb{N}$ and $x=\left\{x_{jk}\right\}_{j,k\in\mathbb{N}}$
be a real double sequence. Let $A_x=\left\{\alpha\in\mathbb{R}:\left\{(j,k):x_{jk}<\alpha\right\}\notin I\right\}$.
Then $I$-limit inferior of $x$ is given by
\[ I-\liminf x= \left\{
  \begin{array}{l l}
    \inf A_x & ;\quad \text{if $A_x \neq \phi$ }\\
    \infty &; \quad \text{if $A_x = \phi$ }
  \end{array} \right.\]\\

 It is known (Theorem 3, [28]) that $I$-lim inf $x =\alpha $ (finite) if and only if for arbitrary $\epsilon>0$,
\begin{center}
$\left\{(j,k):x_{jk}<\epsilon+\alpha\right\}\notin I$ and $\left\{(j,k):x_{jk}<\alpha-\epsilon\right\}\in I$
\end{center}
\end{defn}
\begin{defn}
Let $x=\left\{x_{ij}\right\}_{i,j\in\mathbb{N}}$ be a  double sequence of a PM space $(S,\mathfrak{ F},\tau)$. Let K be a subset of $\mathbb{N}\times\mathbb{N}$ such that for each $( i,j)\in \mathbb{N}\times\mathbb{N},$ there exists a $(m,n)\in K$ such that $(m,n)>(i,j)$ with respect to dictionary order. Then we define $\left\{x\right\}_K = \left\{x_{ij}\right\}_{i,j\in K}$ as a subsequence of $x$.
\end{defn}
\begin{thm}
:  Let $(S,\mathfrak{ F},\tau)$ be a PM space and $x=\left\{x_{jk}\right\}_{j,k\in\mathbb N}$ be  a strong $I$-statistically pre-Cauchy double sequence in $S$. If $x=\left\{x_{jk}\right\}_{j,k\in\mathbb{N}}$ has a subsequence $\left\{x_{t_jt_k}\right\}_{j,k\in \mathbb N}$ which is strong converges to $L$ and
\begin{center}
$0<I-\lim\limits_{\stackrel{\stackrel{m\rightarrow\infty}{n\rightarrow\infty}}~}inf\frac{1}{mn}\left|\left\{(t_j,t_k):t_j\leq m,t_k\leq n;j,k\in \mathbb N\right\}\right|<\infty$,
\end{center}
 then $x$ is strong $I$-statistically convergent to $L$.
\end{thm}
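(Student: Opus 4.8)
The plan is to combine the characterization of strong $I$-statistical pre-Cauchyness in terms of the $d_{L}$-averages (Theorem 3.2) with the triangle-type inequality for $d_{L}$ recorded in Note 2.1, and then to absorb the unavoidable error terms using the $I$-$\liminf$ hypothesis. Write $\sigma_{mn}:=\frac{1}{m^{2}n^{2}}\sum_{j,p\le m}\sum_{k,q\le n}d_{L}(F_{x_{jk}x_{pq}},\epsilon_{0})$; since $x$ is strong $I$-statistically pre-Cauchy, Theorem 3.2 gives $I\text{-}\lim\sigma_{mn}=0$. Fix $t>0$ and $\delta>0$. By Theorem 2.1 the condition $x_{jk}\notin\mathcal{N}_{L}(t)$ is the same as $d_{L}(F_{x_{jk}L},\epsilon_{0})\ge t$, so it suffices to show
\[ E:=\{(m,n):\tfrac{1}{mn}|\{(j,k):d_{L}(F_{x_{jk}L},\epsilon_{0})\ge t,\ j\le m,\ k\le n\}|\ge\delta\}\in I. \]
Apply Note 2.1 to this $t$ and pick $\eta\in(0,t)$ with the property that $d_{L}(F_{ab},\epsilon_{0})<t$ whenever $d_{L}(F_{ac},\epsilon_{0})<\eta$ and $d_{L}(F_{cb},\epsilon_{0})<\eta$; equivalently, $d_{L}(F_{ab},\epsilon_{0})\ge t$ together with $d_{L}(F_{cb},\epsilon_{0})<\eta$ forces $d_{L}(F_{ac},\epsilon_{0})\ge\eta$. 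Since $\{x_{t_{j}t_{k}}\}$ strong-converges to $L$, fix $N_{0}$ with $d_{L}(F_{x_{t_{j}t_{k}}L},\epsilon_{0})<\eta$ for all $j,k\ge N_{0}$; then, taking $a=x_{jk}$, $b=L$, $c=x_{t_{p}t_{q}}$ with $p,q\ge N_{0}$, we conclude: if $d_{L}(F_{x_{jk}L},\epsilon_{0})\ge t$, then $d_{L}(F_{x_{jk}x_{t_{p}t_{q}}},\epsilon_{0})\ge\eta$.

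The key step is the pointwise bound $\sigma_{mn}\ge\eta\,P_{mn}\,Q_{mn}$, valid for every $(m,n)$, where
\[ P_{mn}:=\tfrac{1}{mn}|\{(j,k):d_{L}(F_{x_{jk}L},\epsilon_{0})\ge t,\ j\le m,\ k\le n\}|,\quad Q_{mn}:=\tfrac{1}{mn}|\{(t_{p},t_{q}):t_{p}\le m,\ t_{q}\le n,\ p,q\ge N_{0}\}|. \]
To see this, in the double sum defining $\sigma_{mn}$ (all of whose summands are $\ge 0$) retain only those terms whose first point is some $x_{jk}$ with $d_{L}(F_{x_{jk}L},\epsilon_{0})\ge t$, $j\le m$, $k\le n$, and whose second point is some $x_{t_{p}t_{q}}$ with $p,q\ge N_{0}$, $t_{p}\le m$, $t_{q}\le n$; by the last sentence of the previous paragraph each retained summand is at least $\eta$, and there are exactly $m^{2}n^{2}P_{mn}Q_{mn}$ of them.

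It remains to bound $Q_{mn}$ from below. Put $R_{mn}:=\tfrac{1}{mn}|\{(t_{j},t_{k}):t_{j}\le m,\ t_{k}\le n\}|$ and $d:=I\text{-}\liminf R_{mn}$, so $0<d\le 1$ by hypothesis (the requirement $d<\infty$ is automatic since $R_{mn}\le 1$). Since the $t_{j}$ are strictly increasing, deleting from each coordinate the indices below $N_{0}$ changes $R_{mn}$ by at most $(N_{0}-1)(\tfrac{1}{m}+\tfrac{1}{n})$, so $R_{mn}-Q_{mn}\le(N_{0}-1)(\tfrac{1}{m}+\tfrac{1}{n})$ for all $(m,n)$. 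As $I$ is (strongly) admissible, $G_{1}:=\{(m,n):(N_{0}-1)(\tfrac{1}{m}+\tfrac{1}{n})\ge\tfrac{d}{4}\}$ is contained in a finite union of sets of the form $\{i\}\times\mathbb{N}$ and $\mathbb{N}\times\{i\}$, so $G_{1}\in I$; and the characterization of $I$-$\liminf$ in Definition 3.5 yields $G_{2}:=\{(m,n):R_{mn}<\tfrac{3d}{4}\}\in I$. For $(m,n)\notin G_{1}\cup G_{2}$ we then have $Q_{mn}>\tfrac{d}{2}$, hence $P_{mn}\le\tfrac{2}{\eta d}\,\sigma_{mn}$ by the key bound. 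Finally, $I\text{-}\lim\sigma_{mn}=0$ gives $G_{3}:=\{(m,n):\sigma_{mn}\ge\tfrac{\eta d\delta}{2}\}\in I$, and for $(m,n)\notin G_{1}\cup G_{2}\cup G_{3}$ we obtain $P_{mn}<\delta$. Hence $E\subseteq G_{1}\cup G_{2}\cup G_{3}\in I$, so $E\in I$; as $t>0$ and $\delta>0$ were arbitrary, $x$ is strong $I$-statistically convergent to $L$.

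The step I expect to be the main obstacle is the bookkeeping just sketched: three distinct averaged quantities are in play --- the target density $P_{mn}$, the pre-Cauchy double average $\sigma_{mn}$, and the density $R_{mn}$ of the convergent subsequence --- and one has to control the $N_{0}$-truncation error linking $Q_{mn}$ to $R_{mn}$ uniformly enough that an $I$-$\liminf$ hypothesis can be combined with an $I$-$\lim$ estimate. By contrast, the genuinely PM-space point --- passing from the information that $x_{jk}$ is $t$-far from $L$ while $x_{t_{p}t_{q}}$ is $\eta$-close to $L$ to the conclusion that $x_{jk}$ and $x_{t_{p}t_{q}}$ are $\eta$-far apart --- is disposed of cleanly once $\eta$ has been extracted from Note 2.1, which is why that choice is made at the very beginning.
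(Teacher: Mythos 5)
Your proof is correct and follows essentially the same route as the paper's: pair each term that is $t$-far from $L$ with a subsequence term that is $\eta$-close to $L$, invoke Note 2.1 to conclude that such pairs are $\eta$-far apart, and then divide the resulting pre-Cauchy estimate by the lower bound on the subsequence density supplied by the $I$-$\liminf$ hypothesis. The only differences are cosmetic --- you route through the averaged quantity $\sigma_{mn}$ of Theorem 3.2 rather than the pair-density in the definition of pre-Cauchyness directly --- and you are in fact more careful than the paper in controlling the $N_{0}$-truncation error between $Q_{mn}$ and $R_{mn}$ (your set $G_{1}$), a point the paper's proof glosses over.
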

\begin{proof}
 Let $I-\lim\limits_{\stackrel{\stackrel{m\rightarrow\infty}{n\rightarrow\infty}}~}inf\frac{1}{mn}\left|\left\{(t_j,t_k):t_j\leq m,t_k\leq n;j,k\in \mathbb N\right\}\right|=r.$ Then $0<r<\infty$. Let $t>0$  and $\delta>0$ be given. We choose $\delta_1>0$ such that $\frac{2\delta_1}{r}<\delta$. Now for that $t>0$ there exists $\eta>0$ such that for all $a,b,c \in S$ we have $d_L(F_{ac},\epsilon_0)<t$ whenever $d_L(F_{ab},\epsilon_0)<\eta$ and $d_L(F_{bc},\epsilon_0)<\eta$..................(1). Now select $n_0\in\mathbb N$ such that $t_j>n_0$, $t_k>n_0$ for some $j,k\in\mathbb N$ then $d_L(F_{x_{t_jt_k}L},\epsilon_0)<\eta$. Let $A=\left\{(t_j,t_k):t_j>n_0, t_k>n_0;j,k\in \mathbb{N}\right\}$ and $B(t)=\left\{(j,k):d_L(F_{x_{jk}L},\epsilon_0)\geq t\right\}$. But we have from (1) that
\\ $\frac{1}{m^2n^2}\left|\left\{(j,k):d_L(F_{x_{jk}L},\epsilon_0)\geq\eta;j,p\leq m;k,q\leq n\right\}\right|\geq\frac{1}{m^2n^2}\sum\limits_{\stackrel{j,p\leq m}~}\sum\limits_{\stackrel{k,q\leq n}~}\chi_{A\times B(t)}((j,k)\times (p,q))$\\=$\frac{1}{mn}\left|\left\{(t_j,t_k)\in A:t_j\leq  m,t_k\leq n\right\}\right|\times\frac{1}{mn}\left|\left\{(p,q):d_L(F_{x_{pq}L},\epsilon_0)\geq t;p\leq m,q\leq n\right\}\right|$. Since $x$ is strong $I$-statistically pre-Cauchy so for $\delta_1>0$ and $\eta>0$
\begin{center}
$C=\left\{(m,n)\in\mathbb{N}\times\mathbb{N}:\frac{1}{m^2n^2}\left|\left\{(j,k):d_L(F_{x_{jk}{x_{pq}}},\epsilon_0)\geq\eta;j,p\leq m;k,q\leq n\right\}\right|\geq\delta_1\right\}\in I$.
\end{center}
Therefore for every $(m,n)\in C^c$ we have $\frac{1}{m^2n^2}\left|\left\{(j,k):d_L(F_{x_{jk}{x_{pq}}},\epsilon_0)\geq\eta\right\}\right|<\delta_1.......(ii)$. Now since $I-\lim\limits_{\stackrel{\stackrel{m\rightarrow\infty}{n\rightarrow\infty}}~}inf\frac{1}{mn}\left|\left\{(t_j,t_k):t_j\leq m,t_k\leq n;j,k\in \mathbb N\right\}\right|=r$, so the set$D=\left\{(m,n):\frac{1}{mn}\left|\left\{(t_j,t_k):t_j\leq m,t_k\leq n;j,k\in \mathbb N\right\}\right|<\frac{r}{2}\right\}\in I$. So every $(m,n)\in D^c$ we have $\frac{1}{mn}\left|\left\{(t_j,t_k):t_j\leq m,t_k\leq n;j,k\in \mathbb N\right\}\right|\geq\frac{r}{2}...............(iii)$. Now from (ii), (iii) we get for every $(m,n)\in C^c\cup D^c=(C\cup D)^c ,$
\begin{center}
 $\frac{1}{mn}\left|\left\{(p,q):d_L(F_{x_{pq}L},\epsilon_0)\geq t\right\}\right|<\frac{2\delta_1}{r}<\delta$.

\end{center}
 This implies $\left\{(m,n)\in\mathbb{N}\times\mathbb{N}:\frac{1}{mn}\left|\left\{(j,k):d_L(F_{x_{jk}L},\epsilon_0)\geq t;j\leq m,k\leq n\right\}\right|\geq\delta\right\}\subseteq (C\cup D)$. Since $C,D\in I$ thus $C\cup D\in I$ and so
 \begin{center}
 $\left\{(m,n)\in \mathbb{N}\times\mathbb{N}:\frac{1}{mn}\left|\left\{(j,k):d_L(F_{x_{jk}L},\epsilon_0)\geq t;j\leq m,k\leq n\right\}\right|\geq\delta\right\}\in I.$
 \end{center}
    This shows that $x$ is strong $I$-statistically convergent to $L$.
\end{proof}
 To give an example of a sequence which is strong $I$-statistically pre-Cauchy but not strong $I$-statistically convergent we first observe that every strong $I$-statistically convergent sequence must have a strong convergent subsequence which is convergent in the usual sense. Since it is not straight forward so we give proof below.
\\ Let $x=\left\{x_{jk}\right\}$ be a strong $I$-statistically convergent double sequence in a PM space convergent to a. For $\delta=t=1$ we have
\begin{center}
 $C=\left\{\left(m,n\right)\in \mathbb{N}\times\mathbb{N} :\frac{1}{mn} \left|\left\{\left(j,k\right):d_L(F_{x_{jk}a},\epsilon_0)\geq 1;j\leq m,k\leq n\right\}\right|\geq 1\right\}\in I.$
\end{center}
Since $I$ is an non-trivial ideal of $\mathbb{N}\times\mathbb{N}$ so $C\neq \mathbb{N}\times\mathbb{N}$ thus there exists $(m_1,n_1)\in C^c$ so that
\begin{center}
$\frac{1}{m_1n_1} \left|\left\{\left(j,k\right):d_L(F_{x_{jk}a},\epsilon_0)\geq 1;j\leq m_1,k\leq n_1\right\}\right|< 1$
$\Rightarrow \frac{1}{m_1n_1} \left|\left\{\left(j,k\right):d_L(F_{x_{jk}a},\epsilon_0)< 1;j\leq m_1,k\leq n_1\right\}\right|>0.$
\end{center}
So there exists $j_1\leq m_1$ and $k_1\leq n_1$ such that $d_L(F_{x_{j_1k_1}a},\epsilon_0)< 1$. Again taking $\delta=t=\frac{1}{2}$ we have
\begin{center}
$D=\left\{\left(m,n\right)\in \mathbb{N}\times\mathbb{N} :\frac{1}{mn} \left|\left\{\left(j,k\right):d_L(F_{x_{jk}a},\epsilon_0)\geq\frac{1}{2};j\leq m,k\leq n\right\}\right|\geq \frac{1}{2}\right\}\in I.$
\end{center}
Let $I$ is strongly admissible ideal then we have $D\cup(\mathbb{N}\times\left\{1,2,...,4n_1\right\})\cup(\left\{1,2,...,4m_1\right\}\times\mathbb N)\in I.$ Since $I$ is non-trivial choose $(m_2,n_2)$ such that $(m_2,n_2)\notin D$ and $m_2>4m_1,n_2>4n_1$. Then
\begin{center}
$\frac{1}{m_2n_2} \left|\left\{(j,k):d_L(F_{x_{jk}a},\epsilon_0)\geq\frac{1}{2};j\leq m_2,k\leq n_2\right\}\right|<\frac{1}{2}$
$\Rightarrow \frac{1}{m_2n_2} \left|\left\{\left(j,k\right):d_L(F_{x_{jk}a},\epsilon_0)<\frac{1}{2};j\leq m,k\leq n\right\}\right|> \frac{1}{2}$.
\end{center}
Now that if $d_L(F_{x_{jk}a},\epsilon_0)\geq\frac{1}{2}$ for all $m_1<j<m_2$ and for all $n_1<k<n_2$. Then $\frac{1}{m_2n_2} \left|\left\{\left(j,k\right):d_L(F_{x_{jk}a},\epsilon_0)<\frac{1}{2};j\leq m,k\leq n\right\}\right|\leq \frac{m_1n_1}{m_2n_2}<\frac{1}{16}.$ Consequently there exist $m_1<j\leq m_2$ and $n_1<k\leq n_2$ such that $d_L(F_{x_{jk}a},\epsilon_0)<\frac{1}{2}$. Now we write $j=j_2$ and $k=k_2$ then clearly $j_1<j_2$ and $k_1<k_2$. Proceeding in this way we get a set $K=\left\{(j_1,k_1),(j_2,k_2),...\right\}$ with
$j_1<j_2$, $k_1<k_2$ and $d_L(F_{x_{j_ik_i}a},\epsilon_0)<\frac{1}{i}$. This shows that $x$ has a subsequence $\left\{x\right\}_K$ which is strong convergent to a.
\\We now construct the following example
\\
\\ \textbf{Example 1}: Let $(S,d)$ be the Euclidean line and $H(x)=1-{e^{-x}}$ where $H\in\Delta^+$. Consider the simple space $(S,d,H)$ which is generated by $(S,d)$ and $H$. Then this space becomes a PM space $(S,\mathfrak{F})$ under the continuous triangle function $\tau_M$, which is in fact a Menger space, where $\mathfrak{F}$ is defined on $S\times S$ by

\begin{center}
$\mathfrak{F}(p,q)(t)=F_{pq}(t)=H(\frac{t}{d(p,q)})=1-e^{\frac{-t}{\left|p-q\right|}}$ for all $p,q\in S$ and $t\in\mathbb{R^+}.$
\end{center}
  Here we make the convention that $H(t/0)=H(\infty)=1$ for $t>0$, and $H(0)=H(\frac{0}{0})=0$. Now let $x=\left\{x_{jk}\right\}$ be a double sequence in the PM space $S$ defined by
\begin{center}
$x_{jk}=\sum\limits_{\stackrel{u=1}~}^{m}{\frac{1}{u}}+\sum\limits_{\stackrel{v=1}~}^{n}{\frac{1}{v}}.$
\end{center}
Where $(m-1)!<j\leq m!$ and $(n-1)!<k\leq n!$. Clearly $x$ has no strong convergent subsequence by construction of the sequence. But $x$ is strong statistically pre-Cauchy since for given $t_1>0$ we have the following we define $G_m(t)=1-e^{\frac{-t}{\frac{2}{m}}}$ then clearly $G_m(t)$ is a d.d.f and for $t>0$, $G_m(t)$ weakly convergence to $\epsilon_0$ as $m\rightarrow\infty$. So for that $t_1>0$ there exists an positive integer $M$ such that for all $m\geq M$ we have $d_L(G_m(t),\epsilon_0)<t_1$. Choose $m>M$ and  $m<n$ then for  $m!<m_1\leq (m+1)!$ and $n!<n_1\leq (n+1)!$, also we have $(m-1)!<j,p\leq m_1$, $(n-1)!<k,q\leq n_1$ then $\left|x_{jk}-x_{pq}\right|<\frac{2}{m}$. It follows that for $t_1>0$ and $m!<m_1\leq(m+1)!$, $n!<n_1\leq(n+1)!$.
\begin{center}
$\frac{1}{m_1^2n_1^2}\left|\left\{(j,k):d_L(F_{x_{jk}x_{pq},\epsilon_0})<t; j,p\leq m_1,k,q\leq n_1\right\}\right|\geq \frac{1}{m_1^2n_1^2}[m_1-(m-1)!]^2[n_1-(n-1)!]^2\geq[1-\frac{1}{m}]^2[1-\frac{1}{n}]^2$. Since $\lim\limits_{\stackrel{m\rightarrow\infty,n\rightarrow\infty}~}[1-\frac{1}{m}]^2[1-\frac{1}{n}]^2=1$,
\end{center}
it follows that $x$ is strong statistically pre-Cauchy hence strong $I$-statistically pre-Cauchy.
\begin{defn}
 : Let $(S,\mathfrak{ F},\tau)$ be a PM space and $I$ be a strongly admissible ideal of $\mathbb{N}\times\mathbb{N}$. A double sequence $x=\left\{x_{jk}\right\}_{j,k\in \mathbb{N}}$ in S is said to be strong $I^*$-statistically pre-Cauchy if there exists a set $M\in \mathbb F(I)$ such that $\left\{x\right\}_M$ is strong statistically pre-Cauchy i.e; for $t>0$
\begin{center}
$\lim\limits_{\stackrel{\stackrel{m,n\rightarrow\infty}{(m,n)\in M}}~}\frac{1}{m^2n^2}\left|\left\{(j,k):d_L(F_{x_{jk}x_{pq}},\epsilon_0)\geq t;j,p\leq m;k,q\leq n\right\}\right|=0.$

\end{center}
\end{defn}
\begin{thm}
: Let $(S,\mathfrak{ F}, \tau)$ be a PM space and $x=\left\{x_{jk}\right\}_{j,k\in \mathbb{N}}$ which is strong $I^*$-statistically pre-Cauchy double sequence in $S$ then $x$ is strong $I$-statistically pre-Cauchy.
\end{thm}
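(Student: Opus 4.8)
The plan is to unwind the two definitions and then absorb the exceptional index set into $I$ using the strong admissibility of $I$. Since $x$ is strong $I^{*}$-statistically pre-Cauchy, there is a set $M\in\mathbb{F}(I)$ such that $\{x\}_{M}$ is strong statistically pre-Cauchy; put $H=(\mathbb{N}\times\mathbb{N})\setminus M$, so that $H\in I$ by the definition of the associated filter. I would also record at the outset that, by Theorem 2.1, the conditions $(x_{jk},x_{pq})\notin\mathfrak{U}(t)$ and $d_{L}(F_{x_{jk}x_{pq}},\epsilon_{0})\geq t$ cut out exactly the same set of index pairs, so both pre-Cauchy notions are expressed through the single counting quantity
\[
\Phi_{mn}(t)=\frac{1}{m^{2}n^{2}}\left|\left\{(j,k):(x_{jk},x_{pq})\notin\mathfrak{U}(t);\ j,p\leq m;\ k,q\leq n\right\}\right|.
\]

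Next, fix $t>0$ and $\delta>0$ and set $A=\{(m,n)\in\mathbb{N}\times\mathbb{N}:\Phi_{mn}(t)\geq\delta\}$; the goal is to show $A\in I$. Since $\{x\}_{M}$ is strong statistically pre-Cauchy, the Pringsheim limit of $\Phi_{mn}(t)$ taken along $(m,n)\in M$ equals $0$, so there exists $N\in\mathbb{N}$ with $\Phi_{mn}(t)<\delta$ for all $(m,n)\in M$ satisfying $m\geq N$ and $n\geq N$. Hence every element of $A\cap M$ has first coordinate $<N$ or second coordinate $<N$, i.e.
\[
A\cap M\subseteq\bigl(\{1,2,\dots,N-1\}\times\mathbb{N}\bigr)\cup\bigl(\mathbb{N}\times\{1,2,\dots,N-1\}\bigr).
\]

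The set on the right is a finite union of sets of the form $\{i\}\times\mathbb{N}$ and $\mathbb{N}\times\{i\}$, each of which lies in $I$ because $I$ is strongly admissible; so the right-hand side belongs to $I$, and therefore $A\cap M\in I$ by the hereditary property of an ideal. Finally $A=(A\cap M)\cup(A\cap H)\subseteq(A\cap M)\cup H$ is a union of two members of $I$, whence $A\in I$; since $t,\delta>0$ were arbitrary, $x$ is strong $I$-statistically pre-Cauchy.

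The one step that needs care — and the only place the hypotheses are really used beyond bookkeeping — is the passage from ``Pringsheim convergence along $M$'' to ``$A\cap M\in I$'': one must check that the indices discarded by Pringsheim's definition (those with a small first or second coordinate) form an $I$-set, which is precisely what strong admissibility of $I$ provides. Everything else is set-theoretic manipulation with the ideal axioms and the translation furnished by Theorem 2.1.
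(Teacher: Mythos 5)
Your proof is correct and follows essentially the same route as the paper's: extract $M\in\mathbb{F}(I)$, use the Pringsheim convergence along $M$ to trap the exceptional indices of $A\cap M$ in a finite union of rows and columns (an $I$-set by strong admissibility), and absorb the rest into $(\mathbb{N}\times\mathbb{N})\setminus M\in I$. Your explicit appeal to Theorem 2.1 to reconcile the $\mathfrak{U}(t)$ and $d_{L}$ formulations is a detail the paper leaves implicit, but it does not change the argument.
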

\begin{proof}
Let $t>0$ and $\delta>0$ be given. Since $x=\left\{x_{jk}\right\}_{j,k}$ is strong $I^*$-statistically pre-Cauchy so there exists a set $M\in\mathbb{F}(I)$ such that
\begin{center}
$\lim\limits_{\stackrel{\stackrel{m,n\rightarrow\infty}{(m,n)\in M}}~}\frac{1}{m^2n^2}\left|\left\{(j,k):d_L(F_{x_{jk}x_{pq}},\epsilon_0)\geq t;j,p\leq m;k,q\leq n\right\}\right|=0.$

\end{center}
Then there exists $n_0\in \mathbb{N}$ such that for $(m,n)\in M$ with $m\geq n_0$, $n\geq n_0$ we have
$\frac{1}{m^2n^2}\left|\left\{(j,k):d_L(F_{x_{jk}x_{pq}},\epsilon_0)\geq t;j,p\leq m;k,q\leq n\right\}\right|<\delta.$ Let $K=\left\{1,2,....,n_0-1\right\}.$ Then obviously
\\ $A=\left\{(m,n)\in \mathbb{N}\times\mathbb{N}: \frac{1}{m^2n^2}\left|\left\{(j,k):d_L(F_{x_{jk}x_{pq}},\epsilon_0)\geq t;j,p\leq m;k,q\leq n\right\}\right|\geq \delta\right\}\subseteq (\mathbb{N}\times\mathbb{N}\setminus M )\cup(K\times\mathbb N)\cup (\mathbb{N}\times K).........(i)$. Since $I$ is strongly admissible ideal and $A\in I$ so the set on the right side of (i) belongs to $I$. Hence $x=\left\{x_{jk}\right\}_{j,k\in\mathbb{N}}$ is strong $I$-statistically pre-Cauchy double sequence in a PM space $S$.
\end{proof}
\begin{defn}$[24]$
We say an admissible ideal $I\subset 2^{\mathbb{N}\times\mathbb{N}}$ satisfies the property $(AP_2)$ if for every countable family of mutually disjoint sets $\left\{A_1,A_2,.....\right\}$ belonging to $I$, there exists a countable family of sets $\left\{B_1,B_2,.....\right\}$ such that $A_j\Delta B_j$ is included in the finite union of rows and columns in $\mathbb{N}\times\mathbb{N}$ for each $j\in\mathbb N$ and $B=\bigcup_{j=1}^{\infty}B_j\in I.$ Here $\Delta$ denotes the symmetric difference between two sets.
\end{defn}
\begin{lem} $[24]$
 Let $\left\{P_i\right\}$ be a countable collection of subsets of $\mathbb{N}\times\mathbb{N}$ such that $P_i\in \mathbb F(I)$ for each i, where $\mathbb F(I)$ is a filter associated with a strong admissible ideal $I$ with the property $(AP_2)$. Then there exists a set $P\in \mathbb{N}\times\mathbb{N}$ such that $P\in \mathbb F(I)$ and the set $P\setminus P_i$ is finite for all i.
\end{lem}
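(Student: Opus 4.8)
The plan is to adapt the classical disjointification argument used for ideals with an $(AP)$-type property to the two-dimensional row-and-column setting, and to drive it toward the conclusion that $P\setminus P_i$ is finite for every $i$. First I would pass to complements: since each $P_i\in\mathbb F(I)$, Result~1 gives that $Q_i:=(\mathbb{N}\times\mathbb{N})\setminus P_i$ lies in $I$. I would then disjointify this countable family by setting $A_1=Q_1$ and $A_i=Q_i\setminus\bigcup_{j<i}Q_j$ for $i\geq 2$. By the ideal axioms the $A_i$ are mutually disjoint, each satisfies $A_i\subseteq Q_i\in I$ and hence $A_i\in I$, and by construction $\bigcup_{j\leq i}A_j=\bigcup_{j\leq i}Q_j\supseteq Q_i$ for every $i$; this last containment is what will tie $P\setminus P_i$ to the first $i$ members of the disjoint family.

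Next I would apply the hypothesis $(AP_2)$ to the disjoint family $\{A_1,A_2,\dots\}\subseteq I$. This produces a family $\{B_1,B_2,\dots\}$ whose union $B:=\bigcup_{j=1}^{\infty}B_j$ belongs to $I$, and for which each symmetric difference $A_j\Delta B_j$ is controlled in the sense guaranteed by the property. I would then take as candidate the set $P:=(\mathbb{N}\times\mathbb{N})\setminus B$; because $B\in I$, Result~1 immediately gives $P\in\mathbb F(I)$, which settles the first assertion of the lemma. For the second assertion the key inclusion is $P\setminus P_i\subseteq\bigcup_{j\leq i}(A_j\setminus B_j)$: indeed $P\setminus P_i=P\cap Q_i\subseteq P\cap\bigcup_{j\leq i}A_j=\bigcup_{j\leq i}(P\cap A_j)$, and for each $j\leq i$ we have $P\cap A_j=A_j\setminus B\subseteq A_j\setminus B_j\subseteq A_j\Delta B_j$, the middle step using $B_j\subseteq B$. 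Thus $P\setminus P_i$ is squeezed into a union of only finitely many pieces, each coming from a single symmetric difference $A_j\Delta B_j$ with $j\leq i$.

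The final step is to conclude that this finite union $\bigcup_{j\leq i}(A_j\setminus B_j)$ is actually a finite set, and this is exactly where I expect the main obstacle to lie. The property $(AP_2)$ only places each $A_j\Delta B_j$ inside a finite union of rows and columns of $\mathbb{N}\times\mathbb{N}$, and such a union is itself an infinite set, whereas the stated conclusion demands genuine finiteness of $P\setminus P_i$. The crux is therefore to show that intersecting with $P=(\mathbb{N}\times\mathbb{N})\setminus B$ collapses the surviving row-and-column remnant down to finitely many points, i.e. that $B$ already absorbs all but finitely many points of the finitely many rows and columns that occur in $\bigcup_{j\leq i}(A_j\Delta B_j)$. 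I would attack this by first isolating the explicit finite list of rows and columns contributed by the $A_j\Delta B_j$ with $j\leq i$ (finite because only finitely many indices $j$ and finitely many rows/columns per index are involved), and then arguing row-by-row and column-by-column that $P$ meets the remnant on each of them in only finitely many points, after which a finite union of finite sets completes the proof. Carrying out this collapse rigorously—rather than settling for the weaker statement that $P\setminus P_i$ merely lies in a finite union of rows and columns—is the heart of the matter and the step demanding the most care.
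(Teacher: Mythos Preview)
The paper does not supply its own proof of this lemma; it merely quotes the statement with a citation, so there is nothing in the paper to compare your argument against line by line.

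Your disjointification argument is the standard one and is carried out correctly through the inclusion
\[
P\setminus P_i \;\subseteq\; \bigcup_{j\le i}\bigl(A_j\setminus B_j\bigr)\;\subseteq\;\bigcup_{j\le i}\bigl(A_j\Delta B_j\bigr).
\]
You are also right to flag the final step as the crux: property $(AP_2)$ only places each $A_j\Delta B_j$ inside a finite union of rows and columns, which is an \emph{infinite} set, so the displayed inclusion by itself gives only that $P\setminus P_i$ lies in a finite union of rows and columns, not that it is finite.

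Where your proposal goes wrong is in the plan to ``collapse'' this remnant to a finite set by showing that $P$ meets each of the finitely many surviving rows and columns in only finitely many points. That is not forced by the hypotheses: $P=(\mathbb{N}\times\mathbb{N})\setminus B$ with $B\in I$, and nothing prevents $B$ from being disjoint from an entire row $\{i_0\}\times\mathbb{N}$, in which case $P$ contains that whole row. There is no mechanism in $(AP_2)$ or in strong admissibility that makes $B$ absorb cofinitely many points of the rows and columns appearing in $A_j\Delta B_j$. In short, the literal conclusion ``$P\setminus P_i$ is finite'' does not follow from $(AP_2)$ as defined here; what your argument actually proves is the correct two-dimensional analogue, namely that $P\setminus P_i$ is contained in a finite union of rows and columns. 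That weaker conclusion is exactly what the subsequent theorem needs (it still yields a threshold $k=k(j)$ such that every $(m,n)\in P$ with $m,n\ge k$ lies in $P_j$), so the right fix is to adjust the lemma's statement rather than to attempt the unattainable finiteness.
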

\begin{thm}
Let $(S,\mathfrak{ F},\tau)$ be a PM space. If $I$ is a strong admissible ideal of $\mathbb{N}\times\mathbb{N}$ with the property $(AP_2)$ then the notions of strong $I$-statistically pre-Cauchy and $I^*$-statistically pre-Cauchy coincide.
\end{thm}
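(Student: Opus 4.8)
The statement asserts an equivalence under the hypotheses that $I$ is a strong admissible ideal with $(AP_2)$, so the plan is to handle the two implications separately. One of them -- that a strong $I^*$-statistically pre-Cauchy double sequence is strong $I$-statistically pre-Cauchy -- is precisely the preceding Theorem, and it uses neither $(AP_2)$ nor anything beyond strong admissibility, so no new work is needed there. The substance is the converse: assuming $x=\{x_{jk}\}$ is strong $I$-statistically pre-Cauchy, one must produce a set $M\in\mathbb{F}(I)$ along which $\{x\}_M$ is (ordinarily) strong statistically pre-Cauchy.

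To build such an $M$, first I would record, for each $i\in\mathbb{N}$, the set
\[
A_i=\Big\{(m,n)\in\mathbb{N}\times\mathbb{N}:\frac{1}{m^2n^2}\big|\{(j,k):d_L(F_{x_{jk}x_{pq}},\epsilon_0)\ge \frac{1}{i};\ j,p\le m;\ k,q\le n\}\big|\ge\frac{1}{i}\Big\}.
\]
Applying the definition of strong $I$-statistical pre-Cauchyness with $t=\delta=1/i$ gives $A_i\in I$, hence $P_i:=(\mathbb{N}\times\mathbb{N})\setminus A_i\in\mathbb{F}(I)$. Now I would feed the countable family $\{P_i\}$ into the preceding Lemma, which applies because $I$ is strong admissible with $(AP_2)$: it yields a set $M:=P\in\mathbb{F}(I)$ with $M\setminus P_i=M\cap A_i$ finite for every $i$. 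Since a finite subset of $\mathbb{N}\times\mathbb{N}$ has bounded coordinates, for each $i$ there is $N_i\in\mathbb{N}$ such that $(m,n)\in M$ with $m\ge N_i$ and $n\ge N_i$ forces $(m,n)\notin A_i$.

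It then remains to verify the statistical pre-Cauchy condition of $\{x\}_M$. Fix $t>0$ and $\varepsilon>0$ and choose $i$ with $1/i<\min\{t,\varepsilon\}$. For $(m,n)\in M$ with $m,n\ge N_i$ we have $(m,n)\notin A_i$, i.e.
\[
\frac{1}{m^2n^2}\big|\{(j,k):d_L(F_{x_{jk}x_{pq}},\epsilon_0)\ge \frac{1}{i};\ j,p\le m;\ k,q\le n\}\big|<\frac{1}{i}.
\]
Because $t\ge 1/i$, the family counted with threshold $t$ is contained in the one counted with threshold $1/i$, so its normalized cardinality is likewise $<1/i<\varepsilon$. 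As $\varepsilon$ was arbitrary, this is exactly the assertion that
\[
\lim_{\substack{m,n\to\infty\\(m,n)\in M}}\frac{1}{m^2n^2}\big|\{(j,k):d_L(F_{x_{jk}x_{pq}},\epsilon_0)\ge t;\ j,p\le m;\ k,q\le n\}\big|=0,
\]
so $\{x\}_M$ is strong statistically pre-Cauchy with $M\in\mathbb{F}(I)$; that is, $x$ is strong $I^*$-statistically pre-Cauchy. Together with the preceding Theorem, the two notions coincide.

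The step I expect to be the crux is the passage through the Lemma: one must be sure that ``$M\setminus P_i$ finite for every $i$'' translates, for each $i$, into a single pair of thresholds controlling the Pringsheim double limit along $M$ -- and here finiteness (as opposed to mere containment in finitely many rows and columns) is precisely what bounds both coordinates simultaneously. If one prefers to argue directly from $(AP_2)$ instead of invoking the Lemma, the same conclusion follows after disjointifying: note $A_i\subseteq A_{i+1}$, set $D_1=A_1$ and $D_i=A_i\setminus A_{i-1}$ (a mutually disjoint family in $I$), apply $(AP_2)$ to get $\{B_i\}$ with each $D_i\Delta B_i$ inside finitely many rows and columns and $B=\bigcup_i B_i\in I$, and take $M=(\mathbb{N}\times\mathbb{N})\setminus B$; then $A_i\setminus(B_1\cup\cdots\cup B_i)$ sits in a finite union of rows and columns, which again supplies the thresholds $N_i$. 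Everything else is the routine monotonicity of the counting families in the parameter $i$.
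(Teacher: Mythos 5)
Your proof is correct and follows the same architecture as the paper's: reduce to the single implication ``strong $I$-statistically pre-Cauchy $\Rightarrow$ strong $I^*$-statistically pre-Cauchy'' by citing the preceding theorem, build a countable family $P_i\in\mathbb{F}(I)$ from the $I$-statistical pre-Cauchy condition, invoke the $(AP_2)$ Lemma to obtain one $P\in\mathbb{F}(I)$ with $P\setminus P_i$ finite, and read off the Pringsheim limit along $P$. The one substantive difference is in your favour: the paper fixes $t>0$ \emph{before} defining its sets $P_i$ (which there encode only the density threshold $\tfrac{1}{i}$ at that fixed $t$), so the set $P$ it extracts from the Lemma in principle depends on $t$, whereas the definition of strong $I^*$-statistical pre-Cauchyness demands a single $M\in\mathbb{F}(I)$ working for every $t>0$ simultaneously. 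Your choice $t=\delta=\tfrac{1}{i}$ in the definition of $A_i$ diagonalizes over both parameters at once, and the monotonicity $A_i\subseteq A_{i+1}$ then lets one threshold $N_i$ serve all $t\ge\tfrac{1}{i}$; this yields one $M$ independent of $t$ and thereby closes a gap the paper leaves open (or at best leaves implicit, since one could also repair it by re-running the paper's argument for $t=\tfrac{1}{i}$ and intersecting, which is exactly what your construction does in one pass). Your identification of $(x_{jk},x_{pq})\notin\mathfrak{U}(t)$ with $d_L(F_{x_{jk}x_{pq}},\epsilon_0)\ge t$ is justified by Theorem 2.1, and your remark that finiteness of $M\setminus P_i$ (rather than mere containment in finitely many rows and columns) is what bounds both coordinates is exactly the right point to flag.
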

\begin{proof}
 : From the theorem[3.4] it is sufficient to prove that a strong $I$-statistically pre-Cauchy double sequence $x=\left\{x_{jk}\right\}_{j,k\in\mathbb{N}}$ in a PM space $S$ is strong $I^*$-statistically pre-Cauchy. Let $x=\left\{x_{jk}\right\}_{j,k\in\mathbb{N}}$ in S be a strong $I$-statistically pre-Cauchy double sequence. Let $t>0$ be given. For each $i\in \mathbb{N}$, let $P_i=\left\{(m,n)\in \mathbb{N}\times\mathbb{N}:\frac{1}{m^2n^2}\left|\left\{(j,k):(x_{jk},x_{pq})\notin\mathfrak{U}(t);j,p\leq m;k,q\leq n\right\}\right|<\frac{1}{i}\right\};~$. Then $P_i\in \mathbb F(I)$ for each $i\in\mathbb N$. Since $I$ has the property $(AP_2)$, then by lemma[3.5] there exists a set $P\in \mathbb{N}\times\mathbb{N}$ such that $P\in \mathbb F(I)$ and $P\setminus P_i$ is finite for all $i\mathbb N$. Now we show that
 \begin{center}
$\lim\limits_{\stackrel{\stackrel{m,n\rightarrow\infty}{(m,n)\in P}}~}\frac{1}{m^2n^2}\left|\left\{(j,k):d_L(F_{x_{jk}x_{pq}},\epsilon_0)\geq t;j,p\leq m;k,q\leq n\right\}\right|=0.$

\end{center}
Let $\epsilon>0$ be given then there exists a $j\in \mathbb N$ such that $j>\frac{1}{\epsilon}$. Let $(m,n)\in P$ and since  $P\setminus P_j$ is a finite set, so there exists $k=k(j)\in\mathbb N$ such that $(m,n)\in P_j$ for all $m,n\geq k(j)$. Therefore for all $(m,n)\in P$ with $m,n\geq k(j)$ we have
\begin{center}
$\frac{1}{m^2n^2}\left|\left\{(j,k):d_L(F_{x_{jk}x_{pq}},\epsilon_0)\geq t;j,p\leq m;k,q\leq n\right\}\right|<\frac{1}{j}<\epsilon.$

\end{center}
Since $\epsilon>0$ is arbitrary, so
 \begin{center}
$\lim\limits_{\stackrel{\stackrel{m,n\rightarrow\infty}{(m,n)\in P}}~}\frac{1}{m^2n^2}\left|\left\{(j,k):d_L(F_{x_{jk}x_{pq}},\epsilon_0)\geq t;j,p\leq m;k,q\leq n\right\}\right|=0.$

\end{center}
Therefore $x$ is strong $I^*$-statistically pre-Cauchy.
\end{proof}
Next we present an interesting property of $I$-statistically pre-Cauchy  double sequences of real numbers in
line of Theorem 2.4 [8].
\begin{thm}
Let $x=\left\{x_{jk}\right\}_{j,k\in \mathbb N}$ be a double sequence of real numbers and $(\alpha,\beta)$ is an open interval such that $x_{jk}\notin(\alpha,\beta),$ for all $(j,k)\in \mathbb{N}\times\mathbb{N}$. We write $A=\left\{(j,k):x_{jk}\leq\alpha\right\}$ and $B=\left\{(j,k):x_{jk}\geq\beta\right\}$ and further assume that the following property is satisfied
\begin{center}
$\limsup\limits_{\stackrel{m,n\rightarrow\infty}~}D_{mn}(A)$-$\liminf\limits_{\stackrel{m,n\rightarrow\infty}~}D_{mn}(A)<r.$
\end{center}
for some $0\leq r\leq 1.$ If $x$ is $I$-statistically pre-Cauchy then either $I-\lim\limits_{\stackrel{m,n\rightarrow\infty}~}D_{mn}(A)=0$ or $I-\lim\limits_{\stackrel{m,n\rightarrow\infty}~}D_{mn}(B)=0$, where $D_{mn}(A)=\frac{1}{mn}\left|\left\{(j,k)\in A:j\leq m, k\leq n\right\}\right|$.
\end{thm}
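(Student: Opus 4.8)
The plan is to push the whole problem onto the single scalar quantity $g(m,n):=D_{mn}(A)$ and to exploit that the parabola $t\mapsto t(1-t)$ on $[0,1]$ vanishes only at $t=0$ and $t=1$. First I would note that, since $(\alpha,\beta)$ is a genuine open interval (so $\alpha<\beta$) and $x_{jk}\notin(\alpha,\beta)$ for every $(j,k)$, the sets $A,B$ satisfy $A\cup B=\mathbb{N}\times\mathbb{N}$ and $A\cap B=\emptyset$; hence for each $(m,n)$ we have $\left|A\cap([1,m]\times[1,n])\right|+\left|B\cap([1,m]\times[1,n])\right|=mn$, so $D_{mn}(A)+D_{mn}(B)=1$, i.e. $D_{mn}(B)=1-g(m,n)$. (If $r=0$ the hypothesis is vacuous, so one may assume $0<r\leq1$.)

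Next I would extract the quantitative content of pre-Cauchyness. Put $\epsilon_0:=\beta-\alpha>0$. Whenever $(j,k)\in A$ and $(p,q)\in B$, or the reverse, one has $\left|x_{jk}-x_{pq}\right|\geq\epsilon_0$, so counting ordered quadruples gives, for every $(m,n)$,
\[
\frac{1}{m^2n^2}\left|\left\{(j,k),(p,q):\left|x_{jk}-x_{pq}\right|\geq\epsilon_0;\ j,p\leq m;\ k,q\leq n\right\}\right|\ \geq\ 2\,g(m,n)\left(1-g(m,n)\right).
\]
Since $x$ is $I$-statistically pre-Cauchy, feeding this into the defining condition with $\epsilon=\epsilon_0$ shows that for every $\delta>0$ the set $\{(m,n):g(m,n)(1-g(m,n))\geq\delta\}$ is contained in a member of $I$, hence lies in $I$; that is, $I-\lim_{m,n\to\infty}g(m,n)(1-g(m,n))=0$.

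Then I would run a dichotomy on the ordinary (Pringsheim) limits $L:=\liminf_{m,n\to\infty}g(m,n)$ and $U:=\limsup_{m,n\to\infty}g(m,n)$, the central claim being that $L=0$ or $U=1$. If this failed, then $0<L\leq U<1$, so by definition of the Pringsheim limits there is an $N$ with $g(m,n)\in[\tfrac{L}{2},\tfrac{1+U}{2}]\subset(0,1)$ for all $m\geq N,\ n\geq N$; by concavity, $g(m,n)(1-g(m,n))$ is then bounded below on that tail by a constant $c>0$. Because $I$ is strongly admissible, the complement of $\{(m,n):m\geq N,\ n\geq N\}$ is a finite union of rows and columns of $\mathbb{N}\times\mathbb{N}$ and so lies in $I$; hence the tail lies in $\mathbb{F}(I)$, and therefore $\{(m,n):g(m,n)(1-g(m,n))\geq c\}\in\mathbb{F}(I)$. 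But the previous step puts the same set in $I$, forcing $\mathbb{N}\times\mathbb{N}\in I$ and contradicting non-triviality.

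Finally I would close using the oscillation hypothesis $U-L<r\leq1$. If $L=0$, then $U<r\leq1$, so $U<1$; choose $\theta\in(U,1)$ and $N$ with $g(m,n)<\theta$ for $m\geq N,\ n\geq N$. For any $\epsilon>0$, the conditions $m\geq N$, $n\geq N$, $g(m,n)\geq\epsilon$ give $g(m,n)(1-g(m,n))\geq\epsilon(1-\theta)>0$, so $\{(m,n):m\geq N,\ n\geq N,\ g(m,n)\geq\epsilon\}\in I$ by the second step; adjoining the finitely many rows and columns with $m<N$ or $n<N$ keeps us inside $I$, whence $\{(m,n):g(m,n)\geq\epsilon\}\in I$ for every $\epsilon>0$, i.e. $I-\lim D_{mn}(A)=0$. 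If instead $U=1$, then $1-L<r\leq1$ forces $L>0$, and the symmetric argument applied to $1-g=D_{mn}(B)$ gives $I-\lim D_{mn}(B)=0$. The hard part is the dichotomy step: one must reconcile the unrestricted Pringsheim oscillation assumption, which ``sees'' all of $\mathbb{N}\times\mathbb{N}$, with ideal-smallness, and the lever that makes it work is precisely that a tail $\{m\geq N,\ n\geq N\}$ is $I$-large because $I$ is strongly admissible.
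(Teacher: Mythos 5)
Your proof is correct, and its closing argument takes a genuinely different route from the paper's. You share the first two moves with the paper: the complementarity $D_{mn}(B)=1-D_{mn}(A)$ reduces everything to showing $I\text{-}\lim D_{mn}(A)\in\{0,1\}$, and the counting bound (your $\geq 2\,g(m,n)(1-g(m,n))$, the paper's version of the same inequality without the factor $2$) applied at level $\epsilon_0=\beta-\alpha$ yields $I\text{-}\lim g(m,n)\bigl(1-g(m,n)\bigr)=0$. Where you diverge is in converting this into the dichotomy. The paper runs an infinite iteration: at stage $k$ it splits the $\mathbb{F}(I)$-set $\{(m,n):g(1-g)<1/k^{2}\}$ into the pieces $\{g<1/k\}$ and $\{g>1-1/k\}$, and when neither piece is $\mathbb{F}(I)$-large it extracts an alternating sequence forcing $\limsup-\liminf\geq (k-2)/k\rightarrow 1$, contradicting the oscillation hypothesis only in the limit of the process. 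You instead argue once and for all with the Pringsheim $\liminf$ $L$ and $\limsup$ $U$: if $0<L\leq U<1$, then $g(1-g)\geq c>0$ on a tail $\{(m,n):m,n\geq N\}$, which lies in $\mathbb{F}(I)$ by strong admissibility, while $\{g(1-g)\geq c\}\in I$; this makes $I$ trivial, so $L=0$ or $U=1$, and the hypothesis $U-L<r\leq 1$ then selects which of the two $I$-limits is $0$. This is shorter and avoids the paper's delicate ``the process must stop at a finite step'' bookkeeping; it also makes explicit the one structural fact both arguments really need, namely that $I$ is strongly admissible (so tails are $\mathbb{F}(I)$-large and finite unions of rows and columns lie in $I$) --- the theorem does not state this, but it is the paper's standing assumption and its own proof uses it tacitly when it extracts from $M\in\mathbb{F}(I)$ a sequence with both indices tending to infinity. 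The only cost of your route is that it leans on the precise Pringsheim definitions of $\liminf$ and $\limsup$ for double sequences; with those in place the argument is complete.
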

\begin{proof}
Here $B=\mathbb{N}\times\mathbb{N}\setminus A$ and so $D_{mn}(B)=1-D_{mn}(A).$ for all $(m,n)\in\mathbb{N}\times\mathbb{N}.$ To complete the proof it is sufficient to show that either $I-\lim\limits_{\stackrel{m,n\rightarrow\infty}~}D_{mn}(A)=0$ or 1. Note that
\begin{center}
 $\chi_{A\times B}((j,k),(p,q))\leq \left|\left\{(j,k):\left|x_{jk}-x_{p,q}\right|\geq\beta-\alpha\right\}\right|$.........(1)
\end{center}
Since $x$ is $I$-statistically pre-Cauchy, so
\begin{center}
$I-\lim\limits_{\stackrel{m,n\rightarrow\infty}~}\frac{1}{mn}\left|\left\{(j,k):\left|x_{jk}-x_{p,q}\right|\geq\beta-\alpha;j,p\leq m;k,q\leq n\right\}\right|=0.$
\end{center}
But from (1) we get,
\begin{center}
$0=L.H.S= I-\lim\limits_{\stackrel{m,n\rightarrow\infty}~}D_{mn}(A)[D_{mn}(B)]=I-\lim\limits_{\stackrel{m,n\rightarrow\infty}~}D_{mn}(A)[1-D_{mn}(A)].$
\end{center}
Now from the definition of $I$-convergence it follows that
\begin{center}
$\left\{(m,n)\in\mathbb{N}\times\mathbb{N}:D_{mn}(A)[1-D_{mn}(A)]\geq\frac{1}{25}\right\}\in I.$
\end{center}
Say $M=\left\{(m,n)\in\mathbb{N}\times\mathbb{N}:D_{mn}(A)[1-D_{mn}(A)]<\frac{1}{25}\right\}\in\mathbb{F}(I).$ Clearly for all $(m,n)\in M$ either $D_{mn}(A)<\frac{1}{5}$ or $D_{mn}(A)>\frac{4}{5}$. If $D_{mn}(A)<\frac{1}{5}$ for all $(m,n)\in M_1\subset M$ for some $M_1\in \mathbb{F}(I)$ then $I-\lim\limits_{\stackrel{m,n\rightarrow\infty}~}D_{mn}(A)=0$. This is because for any $\epsilon>0$, $0<\epsilon<\frac{1}{5}$, from the definition of $I$-convergence we get $\left\{(m,n)\in\mathbb{N}\times\mathbb{N}:D_{mn}(A)[1-D_{mn}(A)]\geq\frac{1}{\epsilon^2}\right\}=M_2(say)\in\mathbb{F}(I).$ Taking $M_0=M_1\cap M_2$, we see that $M_0\in \mathbb{F}(I)$ and $D_{mn}(A)<\epsilon$, for all $(m,n)\in M_0.$ Therefore
\begin{center}
$\left\{(m,n):D_{mn}(A)\geq\epsilon\right\}\subset (\mathbb{N}\times\mathbb{N}\setminus M_0).$
\end{center}
Since $(\mathbb{N}\times\mathbb{N}\setminus M_0)\in I$ so $\left\{(m,n):D_{mn}(A)\geq\epsilon\right\}\in I$ and hence  $I-\lim\limits_{\stackrel{m,n\rightarrow\infty}~}D_{mn}(A)=0$.
Similarly if $D_{mn}(A)>\frac{4}{5}$ for all $(m,n)\in M_3\subset M$ for some $M_3\in\mathbb{F}(I).$ then we can show that $I-\lim\limits_{\stackrel{m,n\rightarrow\infty}~}D_{mn}(A)=1.$
\\ If neither of above cases happen then considering dictionary order on $\mathbb{N}\times\mathbb{N}$, we can find an increasing sequence
\begin{center}
$\left\{(m_1,n_1)<(m_2,n_2)<.....\right\}$
\end{center}
 from M such that
 \begin{center}
 $D_{m_in_i}<\frac{1}{5}$ when $i$ is odd integer.
 \end{center}
\begin{center}
$D_{m_in_i}>\frac{4}{5}$ when $i$ is even integer.
\end{center}
 Then clearly
 \begin{center}
 $\limsup\limits_{\stackrel{m,n\rightarrow\infty}~}D_{mn}(A)$-$\liminf\limits_{\stackrel{m,n\rightarrow\infty}~}D_{mn}(A)\geq \frac{3}{5}.$
 \end{center}
We again start the above process and see that
\begin{center}
$\left\{(m,n)\in\mathbb{N}\times\mathbb{N}:D_{mn}(A)[1-D_{mn}(A)]<\frac{1}{36}\right\}=M_4(say)\in \mathbb{F}(I).$
\end{center}
Which consequently implies as above that either $I-\lim\limits_{\stackrel{m,n\rightarrow\infty}~}D_{mn}(A)=1.$ or $I-\lim\limits_{\stackrel{m,n\rightarrow\infty}~}D_{mn}(A)=0.$ or if neither holds then
\begin{center}
$\limsup\limits_{\stackrel{m,n\rightarrow\infty}~}D_{mn}(A)$-$\liminf\limits_{\stackrel{m,n\rightarrow\infty}~}D_{mn}(A)\geq \frac{4}{6}.$
\end{center}
Proceeding in this way we observe that the process stops only when we get either  $I-\lim\limits_{\stackrel{m,n\rightarrow\infty}~}D_{mn}(A)=0.$ or  $I-\lim\limits_{\stackrel{m,n\rightarrow\infty}~}D_{mn}(A)=1.$ and if it does not stop at a finite step then we will have
\begin{center}
$\limsup\limits_{\stackrel{m,n\rightarrow\infty}~}D_{mn}(A)$-$\liminf\limits_{\stackrel{m,n\rightarrow\infty}~}D_{mn}(A)\geq \lim\limits_{\stackrel{k\rightarrow\infty}~}\frac{k-2}{k}.$
\end{center}
Which contradicts to our assumption that $0\leq r\leq 1$. This completes the proof of the theorem.
\end{proof}
\textbf{Remark 1} : For $A\subset \mathbb{N}\times\mathbb{N}$ if $I-\lim\limits_{\stackrel{m,n\rightarrow\infty}~}\frac{1}{mn}\left|\left\{(j,k)\in A:j\leq m, k\leq n\right\}\right|$ exists we can say that the $I$-asymptotic density of $A$ exists and denote it by $d_I (A)$. Therefore the above result can be re-phrased as: Let $x=\left\{x_{jk}\right\}_{j,k\in \mathbb N}$ be a double sequence of real numbers and $(\alpha,\beta)$ is an open interval such that $x_{jk}\notin(\alpha,\beta),$ for all $(j,k)\in \mathbb{N}\times\mathbb{N}$. We write $A=\left\{(j,k):x_{jk}\leq\alpha\right\}$ and further assume that the following property is satisfied
\begin{center}
$\limsup\limits_{\stackrel{m,n\rightarrow\infty}~}D_{mn}(A)$-$\liminf\limits_{\stackrel{m,n\rightarrow\infty}~}D_{mn}(A)<r.$
\end{center}
for some $0\leq r\leq 1.$ If $x$ is $I$-statistically pre-Cauchy then either $d_I(A)=0$ or $d_I(A)=1.$ It should be mentioned in this context that for $I = I_{fin}$, the above result holds without any additional assumption since it is easy to see thus we omitted. For our final result we assume that $I$ is such an ideal and $x$ is such that the above result holds without any
additional assumption i.e;
\\ $(**)$ If $x=\left\{x_{jk}\right\}_{j,k\in\mathbb N}$ is $I$-statistically pre-Cauchy double sequence of real numbers and $x_{jk}\notin(\alpha,\beta)$ for all $(j,k)\in\mathbb{N}\times\mathbb{N}$ then either $d_I(\left\{(j,k): x_{jk}\leq\alpha\right\})=0$ or $d_I(\left\{(j,k): x_{jk}\geq\beta\right\})=0$.
\\ Before we prove our final result, we introduce the following definition.
\begin{defn}
A real number $\xi$ is said to be an $I$-statistical cluster point of a double sequence $x=\left\{x_{jk}\right\}_{j,k\in \mathbb N}$ of real numbers if for any $\epsilon>0$
\begin{center}
$d_I(\left\{(j,k):\left|x_{jk}-\xi\right|<\epsilon\right\})\neq 0.$
\end{center}
\end{defn}
\begin{thm}
Let $x=\left\{x_{jk}\right\}$ be an $I$-statistically pre-Cauchy double sequence of real numbers. If the set of limit points of $x$ is no-where dense and $x$ has a $I$-statistical cluster point. Then $x$ is $I$-statistically convergent under the hypothesis $(**)$.
\end{thm}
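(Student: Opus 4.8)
The plan is to prove that $x$ is $I$-statistically convergent to the given $I$-statistical cluster point $\xi$. Since for $A\subseteq\mathbb{N}\times\mathbb{N}$ the assertion $d_I(A)=0$ means exactly $I-\lim_{m,n\to\infty}D_{mn}(A)=0$, Definition 4.1 reduces the task to showing $d_I(\{(j,k):|x_{jk}-\xi|\geq\epsilon\})=0$ for every $\epsilon>0$. Fix such an $\epsilon$; writing this set as $\{x_{jk}\geq\xi+\epsilon\}\cup\{x_{jk}\leq\xi-\epsilon\}$, it suffices by the evident symmetry to establish $d_I(\{x_{jk}\geq\xi+\epsilon\})=0$.

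The central device is to reduce matters to hypothesis $(**)$ by a finite surgery on $x$. The set $L$ of subsequential limit points of $x$ is closed, and being nowhere dense it has empty interior, so the nonempty open interval $(\xi,\xi+\epsilon)$ is not contained in $L$; hence $(\xi,\xi+\epsilon)\cap L^{c}$ is a nonempty open set and contains an open subinterval $(\alpha,\beta)$. Pick $\alpha',\beta'$ with $\alpha<\alpha'<\beta'<\beta$, so $\xi<\alpha'<\beta'<\xi+\epsilon$ and $[\alpha',\beta']\cap L=\emptyset$. As $[\alpha',\beta']$ is compact and meets no limit point of $x$, only finitely many pairs $(j,k)$ can satisfy $x_{jk}\in[\alpha',\beta']$ --- otherwise those values would accumulate inside $[\alpha',\beta']$ and produce a limit point there. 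Let $y=\{y_{jk}\}$ be obtained from $x$ by resetting $x_{jk}$ to $\xi$ at each of these finitely many pairs; then $y_{jk}\notin(\alpha',\beta')$ for all $(j,k)$. A finite alteration of a double sequence changes $D_{mn}$ of any set, and the pre-Cauchy ratio $\frac{1}{m^{2}n^{2}}|\{\cdots\}|$, by an amount of order $1/(mn)$, which tends to $0$ in Pringsheim's sense; hence $y$ is again $I$-statistically pre-Cauchy, and a finite change of a set does not affect whether its $d_I$-value is $0$.

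Now apply $(**)$ to $y$: either $d_I(\{y_{jk}\leq\alpha'\})=0$ or $d_I(\{y_{jk}\geq\beta'\})=0$. The first alternative is impossible: putting $\delta_{0}=\alpha'-\xi>0$ we have $\{(j,k):|x_{jk}-\xi|<\delta_{0}\}\subseteq\{x_{jk}\leq\alpha'\}$, and $d_I(\{y_{jk}\leq\alpha'\})=0$ would give $d_I(\{x_{jk}\leq\alpha'\})=0$ (finite change), hence $d_I(\{(j,k):|x_{jk}-\xi|<\delta_{0}\})=0$ by monotonicity, contradicting that $\xi$ is an $I$-statistical cluster point of $x$ (Definition 4.9). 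Therefore $d_I(\{y_{jk}\geq\beta'\})=0$, so $d_I(\{x_{jk}\geq\beta'\})=0$; and since $\beta'<\xi+\epsilon$, the inclusion $\{x_{jk}\geq\xi+\epsilon\}\subseteq\{x_{jk}\geq\beta'\}$ gives $d_I(\{x_{jk}\geq\xi+\epsilon\})=0$. The mirror-image argument, using a gap of $L$ inside $(\xi-\epsilon,\xi)$, yields $d_I(\{x_{jk}\leq\xi-\epsilon\})=0$, whence $d_I(\{|x_{jk}-\xi|\geq\epsilon\})=0$; as $\epsilon>0$ was arbitrary, $x$ is $I$-statistically convergent to $\xi$.

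The main obstacle is precisely this reduction step: $(**)$ applies verbatim only to a sequence that omits a whole open interval, while $x$ itself need not, so one must turn nowhere-density of $L$ into the statement that all but finitely many terms of $x$ avoid a suitable closed sub-gap $[\alpha',\beta']$ of $(\xi,\xi+\epsilon)$, and then confirm that nudging those terms to $\xi$ harms neither the $I$-statistical pre-Cauchy property nor the vanishing of the relevant $I$-densities. Once that is secured, the remainder is a short monotonicity-and-contradiction argument fueled by $(**)$ and the cluster-point hypothesis.
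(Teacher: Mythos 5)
Your proof is correct, and its engine is the same as the paper's: a gap $(\alpha,\beta)$ of the set of limit points lying between $\xi$ and the ``bad'' values, fed into hypothesis $(**)$, with the cluster-point hypothesis ruling out the alternative on the $\xi$-side of the gap. The differences are one of organization plus one genuine refinement. The paper argues by contradiction: assuming $x$ is not $I$-statistically convergent to $\xi$, it shows every point of a whole interval $(\xi-\epsilon_0,\xi)$ would have to be a limit point, contradicting nowhere-density; you argue directly, extracting for each $\epsilon>0$ a gap on each side of $\xi$ and concluding $d_I(\{(j,k):x_{jk}\geq\xi+\epsilon\})=0$ and $d_I(\{(j,k):x_{jk}\leq\xi-\epsilon\})=0$ separately. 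More substantively, the paper passes from ``$c$ is not a limit point'' straight to ``there is an interval $(\alpha,\beta)$ with $x_{jk}\notin(\alpha,\beta)$ for all $(j,k)$,'' which is only immediate if the finitely many stray terms that may land in the gap are ignored; your finite surgery (resetting the finitely many $x_{jk}\in[\alpha',\beta']$ to $\xi$ and checking that this perturbs neither the pre-Cauchy ratios nor any $d_I$-value, the change being of order $1/(mn)$) closes exactly that gap. Note only that this step, and the transfer of $d_I$-values between $x$ and $y$, quietly uses that $I$ is strongly admissible so that sets contained in finitely many rows and columns belong to $I$ --- an assumption consistent with the paper's standing conventions but worth stating explicitly.
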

\begin{proof}
Suppose $x$ has a $I$-statistical cluster point $\xi\in\mathbb R$ . So for any  $\epsilon>0$ we have $d_I(\left\{(j,k):\left|x_{jk}-\xi\right|<\epsilon\right\})\neq 0.$ Assume that $x$ is $I$-statistically pre-Cauchy satisfying the hypothesis $(**)$ but $x$ is not $I$-statistically convergent, so there is an $\epsilon_0>0$ such that  $d_I(\left\{(j,k):\left|x_{jk}-\xi\right|\geq\epsilon_0\right\})\neq 0.$  Without loss of generality we assume that $d_I(\left\{(j,k):x_{jk}\leq\xi-\epsilon_0\right\})\neq 0.$ We claim that every point of $(\xi-\epsilon_0,\xi)$ is a limit point of x. If not, then we can
find an interval $(\alpha,\beta)\subset(\xi-\epsilon_0,\xi)$ such that $x_{jk}\notin(\alpha,\beta)$ for all $(j,k)\in\mathbb{N}\times\mathbb{N}$. From above it
immediately follows that both $d_I(\left\{(j,k): x_{jk}\leq\alpha\right\})=0$ and $d_I(\left\{(j,k): x_{jk}\geq\beta\right\})=0$. But
this contradicts the hypothesis $(**)$. Hence every point of $(\xi-\epsilon_0,\xi)$ is a limit point of $x$ which contradicts that the set of limit points of $x$ is a no-where dense set. Hence $x$ is $I$-statistically convergent.
\end{proof}

 \textbf{Acknowledgment}
\\  The second author is thankful to UGC, India for Research fund.

\end{document}